\newtheorem{Thm}{Theorem}[section]
\newcommand{\red}[1]{{\color{red} #1}}
\theoremstyle{plain}
\newtheorem{Lem}[Thm]{Lemma}
\theoremstyle{plain}
\newtheorem{Prop}[Thm]{Proposition}
\theoremstyle{plain}
\newtheorem{Cor}[Thm]{Corollary}
\theoremstyle{plain}
\theoremstyle{plain}
\newtheorem*{Thm*}{Theorem}
\theoremstyle{plain}
\newtheorem*{Conj*}{Conjecture}
\theoremstyle{definition}
\newtheorem{Def}[Thm]{Definition}
\theoremstyle{definition}
\newtheorem*{Def*}{Definition}
\theoremstyle{definition}
\newtheorem{Eg}[Thm]{Example}
\theoremstyle{definition}
\theoremstyle{definition}
\newtheorem{Assumption}{Assumption}
\theoremstyle{definition}
\theoremstyle{definition}
\theoremstyle{definition}
\theoremstyle{definition}
\theoremstyle{remark}
\newtheorem{Rem}[Thm]{Remark}
\newcommand{\kk}{\Bbbk}
\newcommand{\Z}{\mathbb{Z}}
\newcommand{\N}{\mathbb{N}}
\newcommand{\Q}{\mathbb{Q}}
\renewcommand{\tilde}[1]{\widetilde{#1}}
\newcommand{\opname}[1]{\operatorname{\mathsf{#1}}}
\newcommand{\inj}{\opname{inj}}
\renewcommand{\deg}{\opname{deg}}
\newcommand{\Hf}{{\frac{1}{2}}}
\newcommand{\Rm}[1]{{\longmapsto}}
\newcommand{\Lm}[1]{{\longmapsfrom}}
\newcommand{\cA}{{\mathcal A}}
\newcommand{\cF}{{\mathcal F}}
\newcommand{\cM}{{\mathcal M}}
\newcommand{\cP}{{\mathcal P}}
\newcommand{\cU}{{\mathcal U}}
\newcommand{\bA}{{\mathbf A}}
\newcommand{\uk}{{\underline{k}}}
\newcommand{\tB}{{\widetilde{B}}}
\newcommand{\can}{L}
\newcommand{\clAlg}{{\cA}}
\newcommand{\tw}{{\tilde{w}}}
\tikzstyle{none}=[inner sep=0pt]
\tikzstyle{black box}=[draw=black, fill=black!25]
\tikzstyle{white box}=[draw=black, fill=white]
\tikzstyle{black circle}=[circle,draw=black!50, fill=black!25]
\tikzstyle{red circle}=[circle,draw=red!50, fill=red!25]
\tikzstyle{blue circle}=[circle,draw=blue!50, fill=blue!25]
\tikzstyle{green circle}=[circle,draw=green!50, fill=green!25]
\tikzstyle{yellow circle}=[circle,draw=yellow!50, fill=yellow!25]
\newcommand{\thistheoremname}{}
\newtheorem*{genericthm*}{\thistheoremname}
\newenvironment{namedthm*}[1]
  {\renewcommand{\thistheoremname}{#1}%
   \begin{genericthm*}}
  {\end{genericthm*}}
\renewcommand{\inj}{{\bI}}
\renewcommand{\can}{{\bL}}
\newcommand{\fv}{\opname{f}}
\newcommand{\ufv}{\opname{uf}}
\begin{document}
\newcommand{\frg}{\mathfrak{g}}
\newcommand{\dCan}{\mathbf{B}^*}
\renewcommand{\bA}{\mathbb{A}}
\renewcommand{\can}{\mathbf{L}}
\renewcommand{\inj}{\mathbf{I}}

\newcommand{\bClAlg}{\overline{\clAlg}}
\newcommand{\LP}{\mathcal{LP}}
\newcommand{\hLP}{\widehat{\LP}}

\renewcommand{\Mc}{M^{\circ}}
\newcommand{\tropSet}{\mathcal{M}^\circ}

\newcommand{\upClAlg}{\cU}
\newcommand{\mm}{\mathbf{m}}
\newcommand{\urtriangle}{%
\begin{tikzpicture}%
\draw (1.5ex,0) -- (0,1.5ex);
\draw (1.5ex,1.5ex) -- (1.5ex,0);
\draw (0,1.5ex) -- (1.5ex,1.5ex);
\end{tikzpicture}%
}
\newcommand{\dt}{\tilde{t}}
\renewcommand{\tw}{\opname{tw}}
\title[]{Notes: From Dual Canonical Bases to Triangular Bases of Quantum Cluster
Algebras}
\author{Fan QIN}
\email{qin.fan.math@gmail.com}
\begin{abstract}
These notes are mainly based on \cite{qin2020dual} and a series of
talks given in the workshop \href{http://qsms.math.snu.ac.kr/CARTEA2023}{\color{blue}{\underline{CARTEA}}}.

For any symmetrizable Kac-Moody algebra $\frg$ and any Weyl group
element $w$, the corresponding quantum unipotent subgroup $A_{q}[N_{-}(w)]$
possesses the dual canonical basis $\dCan$. We show that the dual
canonical basis is the (common) triangular basis of the quantum cluster
algebra. Consequently, we deduce that the basis contains all quantum
cluster monomials, extending previous results by the author and Kang-Kashiwara-Kim-Oh. 
\end{abstract}

\maketitle
\tableofcontents{}

\section{Overview}

We fix the base ring $\kk=\Z[q^{\pm\Hf}]$ (at the classical level,
we can take $\kk=\Z,\Q,\ldots$).

Fomin and Zelevinsky \cite{fomin2002cluster} introduced cluster algebras
$\cA$ and expected:
\begin{itemize}
\item for many varieties $\bA$ from Lie theory, their (quantized) coordinate
rings $\kk[\bA]=\cA$,
\item $\kk[\bA]$ has a basis $\can$ in analogous to the dual canonical
basis $\dCan$ of quantum groups $\kk[N_{-}]$ and, moreover, $\can$
contains all cluster monomials.
\end{itemize}
It is natural to ask if the original dual canonical basis $\dCan$
meets the above expectations. By \cite{GeissLeclercSchroeer11}\cite{GY13,goodearl2020integral},
for any Kac-Moody algebra $\frg$ and any Weyl group element $w\in W$,
the quantum unipotent subgroup $\kk[N_{-}(w)]$ is isomorphic to some
$\bClAlg$, where $\bClAlg$ is a quantum cluster algebra whose \emph{frozen
variables }are not inverted.

\begin{Thm*}[{Fomin-Zelevinsky conjecture, main result \cite{qin2020dual}}]

$q^{\frac{\Z}{2}}\dCan$ of $\kk[N_{-}(w)]$ contains all quantum
cluster monomials.

\end{Thm*}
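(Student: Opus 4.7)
The plan is to identify $q^{\Z/2}\dCan$ with the \emph{common triangular basis} of the quantum cluster algebra $\bClAlg \simeq \kk[N_{-}(w)]$, in the sense of the author's earlier work on triangular bases. Once this identification is established, containment of all quantum cluster monomials in $q^{\Z/2}\dCan$ follows from an abstract property of triangular bases: whenever a common triangular basis exists, every quantum cluster monomial must lie in it, because any given cluster monomial is, tautologically, the triangular basis element of its own $g$-vector in the seed containing it, and the triangular basis in that seed is unique.

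The first concrete step is to verify the triangular basis axioms in one seed. Fixing a reduced word $\bi$ of $w$, the Geiss--Leclerc--Schr\"oer / Goodearl--Yakimov construction produces an initial seed $t_{\bi}$ of $\bClAlg$ whose cluster and frozen variables are unipotent quantum minors with combinatorially explicit $g$-vectors. The triangular basis at a seed $t$ is characterized as the unique bar-invariant $\kk$-basis whose expansion in the quantum torus of $t$ is unitriangular, with respect to the dominance order on $g$-vectors, to the quantum cluster monomials of $t$. To verify this for $\dCan$ at $t_{\bi}$, I would chain two known unitriangularities: Lusztig's dual PBW basis indexed by $\N^{\ell(w)}$ is bar-unitriangular to $\dCan$ by Lusztig's theory, and an explicit computation in the quantum shuffle / quantum nilpotent algebra identifies each dual PBW monomial with the corresponding quantum cluster monomial in $t_{\bi}$, plus strictly lower terms in the dominance order on $g$-vectors. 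Composing the two unitriangularities realizes $\dCan$, appropriately rescaled by $q^{\Z/2}$, as the triangular basis at $t_{\bi}$.

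The main obstacle, carrying the bulk of the argument, is to promote this to a \emph{common} triangular basis, that is, to extend triangularity to every seed reachable by mutation. My plan is to argue inductively along the exchange graph. When mutating from a seed $t$ to the adjacent seed in direction $k$, I would show that the triangular basis at $t$ remains triangular at the new seed, by combining (i) the mutation rule for $g$-vectors and the induced behaviour of the dominance order, (ii) bar-invariance of $\dCan$, and (iii) positivity of the expansion of the mutated cluster variable in $\dCan$, which comes from the categorification of $\kk[N_{-}(w)]$ via quiver Hecke (Khovanov--Lauda--Rouquier) algebras and the work of Kang--Kashiwara--Kim--Oh. A crucial finiteness input is the injective-reachability of $\bClAlg$, available in this setting via an explicit sequence of mutations sending each cluster variable eventually into the frozen part; this is what makes the induction close rather than run forever.

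Once $\dCan$ is shown to be the common triangular basis of $\bClAlg$, the theorem follows from the abstract principle recalled in the first paragraph. The main technical subtleties I anticipate are the careful matching of normalizations (the $q^{\Z/2}$ factors, which arise from the discrepancy between Lusztig's bar-invariant normalization of $\dCan$ and the quantum-torus normalization of cluster monomials) and the verification that the frozen-not-inverted quantum cluster algebra $\bClAlg$, rather than its localization, is the correct ambient algebra in which the triangular basis axioms are checked.
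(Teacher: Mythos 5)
Your overall strategy---realize $q^{\Z/2}\dCan$ as the common triangular basis of $\bClAlg\simeq\kk[N_{-}(w)]$ and then invoke the abstract containment of cluster monomials---is exactly the paper's strategy, and your one-seed step (chaining the bar-unitriangularity of the dual PBW basis to $\dCan$ with the identification of dual PBW monomials and cluster-monomial-type elements at the GLS seed) matches the paper's Proposition that $\can^{t_0}$ is the triangular basis with respect to $t_0$. The genuine gap is in the inductive step, which you yourself flag as carrying the bulk of the argument: your ingredient (iii) is \emph{positivity} of expansions in $\dCan$, supplied by the Kang--Kashiwara--Kim--Oh quiver Hecke categorification. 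That input exists only for \emph{symmetric} Kac--Moody algebras; for non-symmetric $\frg$ the dual canonical basis is known \emph{not} to be positive (the paper states this explicitly in its ``Obstruction'' remark), and in non-symmetric type the simple modules of quiver Hecke algebras do not yield $\dCan$. So your induction either fails outright or silently restricts the theorem to the symmetric case, which is precisely the case already settled by KKKO \cite{Kang2018} and by the author's earlier positivity-based work \cite{qin2017triangular}. The whole point of \cite{qin2020dual} is to eliminate positivity in favor of tropical arguments: admissibility in direction $k$ implies compatible pointedness at $t,\mu_{k}t$ (Proposition \ref{prop:admissibility_to_compatibility}), whose proof pins down the $\prec_{t'}$-leading term using sign-coherence of degrees and bar-invariance alone.

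Concretely, what your induction lacks is the mechanism that verifies the hypotheses of the existence theorem (Theorem \ref{thm:existence}): one needs both the cluster monomials $X_{k_i}(t_i)^{d}$ \emph{and} the injective cluster monomials $I_{k_i}(t_i)^{d}$ along a mutation sequence from $t$ to $t[1]$ to lie in the candidate basis. The $X_{k_i}(t_i)^{d}$ are unipotent quantum minors and lie in $\dCan$ by the $T$-system results of \cite{GeissLeclercSchroeer11}\cite{GY13}; but for the $I_{k_i}(t_i)^{d}$ the paper needs an ingredient absent from your proposal: the Kimura--Oya twist automorphism $\tw$ of $\clAlg$ \cite{kimura2017twist}, which preserves $\can^{t_0}$ and satisfies $\tw(X_{k_i}(t_i))=p_i\cdot I_{k_i}(t_i)$ for frozen factors $p_i$, whence $I_{k_i}(t_i)^{d}\in\can^{t_0}$. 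Injective-reachability alone (your ``finiteness input'') provides the mutation sequence but says nothing about membership of these injective cluster monomials in $\dCan$; without that, the propagation of triangularity cannot begin in the non-positive setting.
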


\begin{Rem}

\cite{qin2017triangular} verified the statement for $\frg$ of type
$ADE$ and partially for symmetric Kac-Moody algebras $\frg$ (i.e.,
the generalized Cartan matrix is symmetric). Both \cite{qin2020dual}
and \cite{qin2017triangular} are based on the \textbf{common triangular
bases} of cluster algebras.

\cite{Kang2018} verified the claim for all symmetric Kac-Moody $\frg$
based on a very different approach (monoidal categorification by quiver
Hecke algebras). After the appearance of \cite{qin2020dual}, \cite{mcnamara2021cluster}
generalized the result of \cite{Kang2018} to all $\frg$ as well.

\end{Rem}

Our approach for this result can be sketched as the following.

We consider the common triangular basis $\can$ in the sense of \cite{qin2017triangular}
which contains all cluster monomials.

\begin{Thm}\label{thm:existence}

Under some conditions, $\can$ exists.

\end{Thm}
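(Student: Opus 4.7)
The plan is to construct $\can$ by a Gram--Schmidt-type procedure relative to a well-chosen initial seed and then to verify that the resulting basis satisfies the triangularity axioms with respect to every seed in the mutation class.

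First, I would fix a reference seed $t_{0}$ and parametrize the candidate basis elements by their dominant $g$-vectors, that is, by Laurent monomials $X^{m}$ in the cluster variables of $t_{0}$. On this lattice one has a dominance partial order $\preceq$: we declare $m' \preceq m$ when $X^{m} X^{-m'}$ is a (not necessarily trivial) monomial in the $y$-variables encoded by the extended exchange matrix. An element $z \in \bClAlg$ is \emph{pointed at} $m$ if $z = X^{m} + \sum_{m' \prec m} c_{m'} X^{m'}$. The goal is to construct, for each $m$ in a suitable index set (the tropical points of the seed), a bar-invariant pointed element $\can(m)$ whose correction coefficients lie in $q^{1/2}\Z[q^{1/2}]$; this normalization forces uniqueness by the standard Kazhdan--Lusztig-type argument.

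The construction of such $\can(m)$ at the reference seed is the classical inductive procedure: start from $X^{m}$, average under the bar involution to kill the leading antisymmetric part, then use induction on $\preceq$ (locally finite under the hypotheses) to absorb lower-order discrepancies term by term. This step is essentially formal once one knows that the bar involution preserves $\bClAlg$ and that an initial PBW-like pointed basis is available at $t_{0}$. Here I would invoke the admissibility / injective-reachability assumption built into the ``conditions'' of the theorem to ensure that the index set of $g$-vectors is cofinal and that the dominance order behaves well with respect to the frozen directions.

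The substantive content, and the step I expect to be the main obstacle, is propagating the triangularity property from $t_{0}$ to every mutation-equivalent seed $t$. The tools are: (i) the tropical transformation rule for $g$-vectors under a single mutation, which tells one how to reindex pointed elements; (ii) the compatibility of the dominance order with mutation, which is precisely what the technical conditions on the seed are designed to guarantee; and (iii) the Laurent phenomenon, ensuring that expansions in $t$ are well defined. Combining these, one argues by induction on $\preceq$ that each $\can(m)$ remains pointed and triangular after re-expansion in $t$. The delicate point is to show that the corrected coefficients, computed at $t_{0}$ to lie in $q^{1/2}\Z[q^{1/2}]$, continue to lie in $q^{1/2}\Z[q^{1/2}]$ when re-expanded in $t$; this is typically reduced to a single-mutation check via a ``unitriangularity is preserved under mutation'' lemma, and it is here that the precise hypotheses enter in an essential way.
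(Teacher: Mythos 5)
Your proposal diverges from the paper's argument in a way that exposes two genuine gaps. First, the theorem is not a construction statement. In the paper, the existence of a triangular basis $\can^{t}$ with respect to one reference seed is itself part of the hypotheses (``conditions''), together with the requirement that $\can^{t}$ contain the cluster monomials $X_{k_{i}}(t_{i})^{d}$ and $I_{k_{i}}(t_{i})^{d}$ along a mutation sequence joining $t$ to $t[1]$; what the theorem asserts is that such a one-seed basis is automatically the \emph{common} triangular basis. Your first step --- building $\can$ at $t_{0}$ by a Kazhdan--Lusztig-type induction from a PBW-like pointed family --- is exactly what the paper warns cannot be done ``essentially formally'': the dominance order $\prec_{t}$ is unbounded on $\Z^{I}$, so the algorithm is an infinite process whose output lives a priori only in the completion $\hLP(t)$ (Remark \ref{rem:KL_basis}); nothing guarantees that these formal series are Laurent polynomials, that they lie in $\upClAlg$, or that they form a basis, and the paper explicitly notes that the distinguished functions $\inj$ are in general \emph{not} a basis. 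In the intended application the one-seed triangular basis is imported from the dual canonical basis $\dCan$ of $\kk[N_{-}(w)]$, not produced inside cluster theory; your proposal has no substitute for this external input.

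Second, your propagation step rests on an unconditional ``unitriangularity is preserved under mutation'' lemma, which does not exist; identifying what replaces it is the actual content of the theorem. The paper's mechanism is a bootstrap between two implications: admissibility implies compatibility (Proposition \ref{prop:admissibility_to_compatibility}: if $\can^{t}$ contains $X_{k}(\mu_{k}t)^{d}$ and $I_{k}(\mu_{k}t)^{d}$, then $\can^{t}$ is also the triangular basis at $\mu_{k}t$ and is compatibly pointed there), and compatibility implies admissibility (Proposition \ref{prop:compatibility_to_admissibility}: a compatibly pointed element whose degree equals that of a cluster monomial is that cluster monomial). The containment hypothesis along the sequence from $t$ to $t[1]$ initializes the bootstrap, and the similarity/correction technique (Lemma \ref{lem:similar_triangular}, Corollary \ref{cor:similar_basis_admissibility}) transports admissibility to the shifted seeds $t[s]$ and into new mutation directions $j$, which is how the argument eventually reaches every seed of $\Delta^{+}$. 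Without these ingredients your induction ``on $\preceq$ after re-expansion in $t$'' has no engine: membership of the correction coefficients in $\mm$ after re-expansion is precisely what is not automatic, and in the paper it is recovered only from bar-invariance combined with the compatible pointedness that admissibility supplies.
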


\begin{Thm}\label{thm:quantum_groups_bases}

Theorem \ref{thm:existence} applies to $\kk[N_{-}(w)]$.

\end{Thm}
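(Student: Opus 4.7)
The strategy is to verify the hypotheses of Theorem \ref{thm:existence} for the quantum cluster algebra $\bClAlg \simeq \kk[N_{-}(w)]$. In the framework of \cite{qin2017triangular}, these hypotheses amount to: an initial quantum seed compatible with a bar involution, a dominance-refined weight grading under which the initial cluster monomials are pointed, and a unitriangular comparison between a PBW-type basis and the prospective triangular basis.

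First, I would fix a reduced word $\underline{w}=s_{i_1}\cdots s_{i_\ell}$ for $w$ and invoke the explicit initial seed of \cite{GeissLeclercSchroeer11, GY13, goodearl2020integral}, whose quantum cluster variables are the unipotent quantum minors $D_k=D(w_{\le k}\varpi_{i_k},\varpi_{i_k})$, with frozen variables corresponding to the rightmost occurrences of each simple index. Each such $D_k$ lies in $\dCan$ and is bar-invariant. Next, the negative root lattice grading of $\kk[N_{-}(w)]$ pulls back to a $\Z^\ell$-grading of $\bClAlg$ with each homogeneous piece of finite rank; refining the resulting partial order yields the dominance order required by Theorem \ref{thm:existence}, and relative to it the initial seed is dominance-graded.

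The triangularity condition at the initial seed then follows from Lusztig--Kashiwara theory: the PBW basis attached to $\underline{w}$ is related to $\dCan$ by a bar-invariant unitriangular transition matrix with off-diagonal entries in $q^{\Hf}\Z[q^{\Hf}]$, and the PBW monomials in turn expand into initial quantum cluster monomials in a controlled triangular fashion (essentially going back to Berenstein--Zelevinsky and Geiss--Leclerc--Schr\"oer). Composing these two expansions checks the pointedness and bar-triangularity of $\dCan$ with respect to the initial cluster, which is the input required by Theorem \ref{thm:existence}.

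The main obstacle is establishing \textbf{common} triangularity: ensuring that $q^{\Z/2}\dCan$ satisfies the analogous triangular condition with respect to every seed obtained by iterated mutation, not merely the initial one. My plan is to propagate the property across a single mutation by showing that each mutated quantum cluster variable again lies in $q^{\Hf\Z}\dCan$ and remains dominance-pointed in the mutated seed, then to bootstrap by induction on mutation distance. This propagation is where the bulk of the work in \cite{qin2020dual} concentrates; concretely, I would combine the Laurent expansion of cluster variables with control of leading Lusztig data under the reflection/braid moves that realize mutations, using the symmetric case of \cite{qin2017triangular} as a guiding template and upgrading its inputs so that the argument survives for arbitrary symmetrizable $\frg$.
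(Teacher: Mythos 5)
There is a genuine gap, and it comes from a misreading of what Theorem \ref{thm:existence} asks you to check. Its hypotheses are: (1) $\can^{t_0}$ is the triangular basis with respect to the \emph{single} initial seed $t_0$, and (2) $\can^{t_0}$ contains the finitely many cluster monomials $X_{k_i}(t_i)^d$ \emph{and} the injective cluster monomials $I_{k_i}(t_i)^d$ appearing along one mutation sequence $\uk$ realizing $t[1]=\mu_{\uk}t$. Your first two steps (GLS/GY initial seed of quantum minors, and the Kazhdan--Lusztig-type unitriangularity between the dual PBW basis and $\dCan$ giving $\prec_{t_0}$-triangularity) correctly address hypothesis (1), and this matches the paper, which quotes exactly this from \cite{qin2017triangular}. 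But what you then call the ``main obstacle'' --- propagating triangularity to every seed by induction on mutation distance, controlling leading Lusztig data under braid moves --- is the \emph{conclusion} of Theorem \ref{thm:existence}, not a hypothesis of it. Planning to prove common triangularity directly means you are not applying Theorem \ref{thm:existence} at all but re-proving the main theorem from scratch, which is precisely the hard road (requiring positivity or its replacement by tropical arguments) that the paper's machinery of admissibility/compatibility and similar seeds was built to avoid.

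Concretely, two verifications are missing. For the $X$-part of hypothesis (2), the paper does not mutate blindly: it chooses the mutation sequence so that the intermediate cluster variables $X_{k_i}(t_i)$ are unipotent quantum minors satisfying $T$-systems \cite{GeissLeclercSchroeer11,GY13}, hence known to lie in $\dCan$. More seriously, you never address the injective cluster monomials $I_{k_i}(t_i)^d$ at all, and this is the crux of the paper's proof: it invokes the twist automorphism $\tw$ of Kimura--Oya \cite{kimura2017twist}, which satisfies $\tw(\can^{t_0})=\can^{t_0}$, and shows $\tw(X_{k_i}(t_i))=p_i\cdot I_{k_i}(t_i)$ for a frozen factor $p_i$; membership $I_{k_i}(t_i)^d\in\can^{t_0}$ then falls out, and Theorem \ref{thm:existence} finishes the proof. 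Without the twist automorphism (or some substitute argument) there is no route in your proposal to get the injectives into the basis, so the hypotheses of Theorem \ref{thm:existence} are never fully verified.
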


For $\kk[N_{-}(w)]$,~ $\dCan$ provides us $\can$, and the main
result is a direct consequence of Theorem \ref{thm:quantum_groups_bases}.

\begin{Rem}

$\can$ generalizes the dual canonical basis $\dCan$ to cluster algebras.
In addition, $\can$ exists for more (quantized) coordinate ring $\kk[\bA]$,
such as double Bruhat cells (in preparation). 

$\can$ is a Kazhdan-Lusztig type basis, see Remark \ref{rem:KL_basis}. 

In known cases where a cluster algebra $\clAlg$ is categorified by
a monoidal category $\cM$ \cite{HernandezLeclerc09}, that is, $\clAlg\simeq K_{0}(\cM)$
and the cluster monomials correspond to simple objects, the set of
simple objects provides us the common triangular basis $\can$.

There are no known examples such that the common triangular basis
$\can$ does not exist (under the assumption that its definition makes
sense, see Assumption \ref{assumption:injective-reachable}).

\end{Rem}

\begin{Rem}[Obstruction]

Unlike \cite{qin2017triangular}, we do NOT require $\can$ to be
positive (i.e., the structure constants are not necessarily positive).
In fact, it is known that $\dCan$ is not positive when $\frg$ is
not symmetric.

We rely on \textbf{tropical properties }instead of positivity, by
which we mean how Laurent degrees change under change of coordinates
(\emph{mutations}).

\end{Rem}

\section*{Acknowledgements}
These notes are based on the series of talks given by the author in the workshop \href{http://qsms.math.snu.ac.kr/CARTEA2023}{\color{blue}{\underline{CARTEA}}}, where the approach to the main result of \cite{qin2020dual} was presented. He deeply
thanks the organizers for their invitation and their kind hospitality.

\section{Cluster algebras}

Let $I=I_{\ufv}\sqcup I_{\fv}$ denote a finite set of vertices with
a partition into \textbf{unfrozen and frozen} ones. 

A \textbf{seed} $t$ is a collection $((X_{i}(t))_{i\in I},\tB(t),\Lambda(t))$,
where $X_{i}(t)$ are cluster variables, $\tB(t)$ an $I_{\ufv}\times I$
$\Z$-matrix, $\Lambda(t)$ an $I\times I$ skew-symmetric $\Z$-matrix,
such that $(\tB(t),\Lambda(t))$ is a compatible pair in the sense
of \cite{BerensteinZelevinsky05}. Denote the matrix entries by $b_{ij}(t)$
and $\Lambda_{ij}(t)$ respectively.

We often omit the symbol $t$ for simplicity.

Let $\cdot$ denote the \textbf{commutative product}: $X_{i}\cdot X_{j}=X_{j}\cdot X_{i}=:X_{i}X_{j}$.

Unless otherwise specified, we use the \textbf{twisted product} $*$
such that
\begin{align*}
X_{i}*X_{j} & :=q^{\Hf\Lambda_{ij}}X_{i}\cdot X_{j}.
\end{align*}

Define the Laurent polynomial ring $\LP(t):=\kk[X_{i}(t)^{\pm}]_{i\in I}$
and denote its fraction field by $\cF(t)$.

Denote $\Mc(t)=\oplus_{i\in I}\Z f_{i}=\Z^{I}$, where $f_{i}$ is
the $i$-th unit vector. For any $m=\sum m_{i}f_{i}\in\Mc(t)$, define
$X^{m}:=\prod X_{i}^{m_{i}}$ where we use the $\cdot$ product. The
\textbf{cluster monomials }in $t$ are $X^{m}$ with $m\geq0$.

Denote $N_{\ufv}(t)=\oplus_{k\in I_{\ufv}}\Z e_{k}=\Z^{I_{\ufv}}$,
where $e_{k}$ is the $k$-th unit vector. For any $n=\sum n_{k}e_{k}\in N_{\ufv}(t)$,
define $Y^{n}:=X^{\tB n}$.

For any $k\in I_{\ufv}$, the mutation $\mu_{k}$ generates a new
seed $t':=\mu_{k}t$ such that

\begin{align*}
X_{i}' & =\begin{cases}
X_{i} & i\neq k\\
X^{-f_{k}+\sum_{i}[b_{ik}]_{+}f_{i}}+X^{-f_{k}+\sum_{j}[-b_{jk}]_{+}f_{j}} & i=k
\end{cases},
\end{align*}
where $[\ ]_{+}:=\max(\ ,0)$. See \cite{BerensteinZelevinsky05}
for $\tB'$ and $\Lambda'$. Note that the \textbf{frozen variables}
$X_{j}$, $j\in I_{f}$, remain unchanged.

In fact, the above formula gives rise to an isomorphism $\mu_{t',t}^{*}:\cF(t')\simeq\cF(t)$,
but we will usually omit this symbol by identifying $\cF(t')$ and
$\cF(t)$ for simplicity.

Given any initial seed $t_{0}$. We denote 
\begin{align*}
\Delta^{+} & :=\Delta_{t_{0}}^{+}:=\{\text{seeds obtained from \ensuremath{t_{0}} by iterated mutations}\}
\end{align*}
Define:
\begin{itemize}
\item The (partially compactified) cluster algebra $\bClAlg:=\kk[X_{i}(t)]_{\forall i,\forall t\in\Delta^{+}}$.
\item The (localized) cluster algebra $\clAlg:=\bClAlg[X_{j}^{-1}]_{j\in I_{\fv}}$.
\item The upper cluster algebra $\upClAlg:=\cap_{t\in\Delta^{+}}\LP(t)$.
We have $\clAlg\subset\upClAlg$ by \cite{fomin2002cluster}\cite{BerensteinZelevinsky05},
and often $\clAlg=\upClAlg$.
\end{itemize}
We sometimes denote $\upClAlg$ by $\upClAlg(t)$ when we want to
view it as a subalgebra of $\LP(t)$.

\begin{Eg}\label{eg:SL_3}

We take $\kk=\Z$, $G=SL_{3}$, and
\[
N_{-}:=\left\{ g=\left(\begin{array}{ccc}
1 & 0 & 0\\
X_{1} & 1 & 0\\
X_{2} & X_{1}' & 1
\end{array}\right),\text{\ensuremath{\forall X_{1},X_{1}',X_{2}} }\right\} \subset G.
\]

Then $\kk[N_{-}]=\kk[X_{1},X_{1}',X_{2}]$. Define the minor $X_{3}:=X_{1}X_{1}'-X_{2}$.
Denote $I=I_{\ufv}\sqcup I_{\fv}:=\{1\}\sqcup\{2,3\}$. The initial
seed $t$ consists of the cluster variables $X_{i}(t):=X_{i}$, $\forall i$,
and the matrix $\tB(t)=\left(\begin{array}{c}
0\\
1\\
-1
\end{array}\right)$. The mutated seed $t':=\mu_{1}(t)$ has a new cluster variable $X_{1}(t'):=X_{1}'$
and the new matrix $\tB(t')=\left(\begin{array}{c}
0\\
-1\\
1
\end{array}\right)$. We have $\bClAlg=\kk[N_{-}]$ and $\upClAlg=\clAlg=\bClAlg[X_{2}^{-1},X_{3}^{-1}]$.

See \cite[Example 2.2]{qin2021cluster} for the quantization.

\end{Eg}

\section{Triangular bases for one seed}

Choose and fix any $t\in\Delta^{+}$. 

Any $z\in\LP(t)$ of the form
\begin{align*}
z= & X^{g}\cdot(1+\sum_{0<n\in\N^{I_{\ufv}}}c_{n}Y^{n})
\end{align*}
for some $g\in\Mc(t)=\Z^{I}$ is called \textbf{$g$-pointed}. Define
the \textbf{degree }$\deg^{t}z:=g$ and the \textbf{normalization
}$[q^{\alpha}z]^{t}:=z$ for any $\alpha\in\Hf\Z$.

We introduce the \textbf{dominance order} $\prec_{t}$ on $\Mc(t)$
such that

\begin{align*}
\deg^{t}X^{g}Y^{n} & \prec_{t}\deg^{t}X^{g}
\end{align*}
whenever $0<n\in\N^{I_{\ufv}}$.

\begin{Rem}

$z$ is analogous to the character of a highest weight module $S(w)$
(for example, of a quantum affine algebra):

\begin{align*}
\chi(S(w)) & =e^{w}\cdot(1+\sum_{v>0}c_{v}e^{v}).
\end{align*}

\end{Rem}

From now on, we make the following assumption (equivalently, there
exists a green to red sequence \cite{keller2011cluster}):

\begin{Assumption}[Injective-reachable]\label{assumption:injective-reachable}

$\exists$ a seed $t[1]$ and a permutation $\sigma$ of $I_{\ufv}$
such that, $\forall k\in I_{\ufv}$,

\begin{align*}
\deg^{t}X_{\sigma k}(t[1]) & \equiv-f_{k}\text{ mod }\Z^{I_{\fv}}.
\end{align*}

\end{Assumption}

Denote $I_{k}(t):=X_{\sigma k}(t[1])$, called the\textbf{ $k$}-th
\textbf{injective cluster variable}.\footnote{It can be categorified by the $k$-th injective module of a Jacobian
algebra \cite{DerksenWeymanZelevinsky09}.}

\begin{Rem}

The assumption is satisfied by almost all well-known cluster algebras.

If $\clAlg$ is categorified by a triangular category (cluster category),
there are rigid objects $T_{k}$ such that 

\begin{align*}
T_{k} & \xrightarrow{\text{categorify}}X_{k}(t)\\
T_{k}[1] & \xrightarrow{\text{categorify}}X_{\sigma k}(t[1])
\end{align*}
where $[1]$ is the suspension functor.

If $\clAlg$ is categorified by a monoidal category \cite{HernandezLeclerc09},
there are simple objects $S_{k}$ such that 

\begin{align*}
S_{k} & \xrightarrow{\text{categorify}}X_{k}(t)\\
D(S_{k}) & \xrightarrow{\text{categorify}}X_{\sigma k}(t[1])
\end{align*}
where the functor $D(\ )$ is the right dual (or the left dual, depending
on the convention).

\end{Rem}

\begin{Def}[{Triangular basis \cite{qin2017triangular}}]

A $\kk$-basis $\can$ of $\upClAlg(t)$ is called a \textbf{triangular
basis}\footnote{When $t$ is \emph{acyclic}, a different definition of triangular
bases was given by \cite{BerensteinZelevinsky2012}. It turns out
that the two definitions are equivalent for acyclic seeds \cite{qin2019compare}\cite{qin2020dual}.} with respect to $t$ if
\begin{itemize}
\item $\can$ contains all cluster monomials in $t,t[1]$.
\item $\can=\{\can_{g}|g\in\Mc(t)=\Z^{I}\}$ such that $\can_{g}$ is $g$-pointed.
\item $\overline{\can_{g}}=\can_{g}$, where the \textbf{bar involution}
$\overline{(\ )}$ on $\LP(t)$ is defined by $\overline{q^{\alpha}X^{m}}:=q^{-\alpha}X^{m}$.
\item $\forall i\in I,g\in\Mc(t)$, we have
\begin{align}
[X_{i}(t)*\can_{g}]^{t} & =\can_{g+f_{i}}+\sum_{g'\prec_{t}g+f_{i}}b_{g'}\can_{g'}\label{eq:triangularity}
\end{align}
such that $b_{g'}\in q^{-\Hf}\Z[q^{-\Hf}]=:\mm$.
\end{itemize}
\end{Def}

\begin{Eg}

In Example \ref{eg:SL_3}, we have $Y_{1}=X^{f_{2}-f_{3}}$ and thus
$X_{1}'=X^{-f_{1}+f_{3}}\cdot(1+Y_{1})$. So we have $t'=t[1]$ with
the trivial permutation $\sigma$. Choose a quantization, see \cite[Example 2.2]{qin2021cluster}.
The triangular basis with respect to $t$ is the set of the localized
quantum cluster monomials, i.e., $X^{m}(t)$ and $X^{m}(t')$ for
$m\in\N^{I_{\ufv}}\oplus\Z^{I_{\fv}}$.

\end{Eg}

\section{Uniqueness}

Assume $\can$ is a triangular basis with respect to $t$. We say
RHS of \eqref{eq:triangularity} is a\textbf{ $(\prec_{t},\mm)$-decomposition}
of LHS. Without requiring $b_{g'}\in\mm$, RHS is called a \textbf{$\prec_{t}$-decomposition}.
A $\prec_{t}$-decomposition into pointed elements is unique \cite{qin2019bases}
and convergent \cite{davison2019strong} in the completion 
\begin{align*}
\hLP(t) & :=\LP(t)\otimes_{\kk[y^{n}]_{n\in\N^{I_{\ufv}}}}\kk\llbracket y^{n}\rrbracket_{n\in\N^{I_{\ufv}}}.
\end{align*}
We denote \eqref{eq:triangularity} by $\text{LHS}\urtriangle_{(\prec_{t},\mm)}\can$
for simplicity.

We introduce the set of \textbf{distinguished functions} $\inj:=\{\inj_{g}|g\in\Mc(t)=\Z^{I}\}$
with respect to $t$, such that $\inj_{g}$ is $g$-pointed and of
the form 
\begin{align*}
\inj_{g} & :=[p_{g}*X^{[g_{\ufv}]_{+}}*I^{[-g_{\ufv}]_{+}}]^{t},
\end{align*}
where $p_{g}$ is a \textbf{frozen factor}, i.e., $p_{g}=X^{m}$ for
some $m\in\Z^{I_{\fv}}$, $g_{\ufv}$ is the natural projection of
$g$ in $\Z^{I_{\ufv}}$. By definition of the triangular basis, we
have $\inj_{g}\urtriangle_{(\prec_{t},\mm)}\can$ for all $g$. We
denote $\inj\urtriangle_{(\prec_{t},\mm)}\can$ in this case.

\begin{Lem}

For any $g\in\Mc(t)=\Z^{I}$, there exists a unique function $f_{g}$
in $\hLP(t)$ such that $f_{g}\urtriangle_{(\prec_{t},\mm)}\can$. 

\end{Lem}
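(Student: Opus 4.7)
The plan is to deduce the lemma from the uniqueness of the $\prec_t$-decomposition in the completion $\hLP(t)$, together with a Kazhdan--Lusztig style use of bar-invariance. I read the shorthand $f_{g}\urtriangle_{(\prec_{t},\mm)}\can$ as carrying bar-invariance of $f_g$, in line with the bar-invariance built into the triangular basis axioms: each $\can_{g'}$ is bar-invariant and both sides of \eqref{eq:triangularity} are bar-invariant after the normalization $[\,\cdot\,]^t$.

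For existence, I would take $f_g := \can_g \in \can$. By the axioms of a triangular basis, $\can_g$ is $g$-pointed, bar-invariant, and the tautological identity $\can_g = \can_g$ is a $(\prec_t,\mm)$-decomposition (with all lower coefficients vacuously in $\mm$). For uniqueness, let $f_g \in \hLP(t)$ be any element with $f_g \urtriangle_{(\prec_t,\mm)} \can$. Since $f_g$ is $g$-pointed, the unique and convergent $\prec_t$-decomposition in $\hLP(t)$ established in \cite{qin2019bases, davison2019strong} produces
\[
f_g \;=\; \can_g + \sum_{g' \prec_t g} c_{g'}\,\can_{g'}, \qquad c_{g'} \in \kk.
\]
The hypothesis forces each $c_{g'} \in \mm = q^{-\Hf}\Z[q^{-\Hf}]$. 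Bar-invariance of $f_g$ and of every $\can_{g'}$, together with the uniqueness of the expansion, forces $\overline{c_{g'}} = c_{g'}$. Since elements of $\mm$ involve only strictly negative powers of $q^{\Hf}$ while bar-invariance demands $q^{\Hf}\leftrightarrow q^{-\Hf}$ symmetry, one has $\mm \cap \overline{\mm} = \{0\}$, so every $c_{g'}$ vanishes and $f_g = \can_g$.

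The main technical step requiring care is the convergence of the $\prec_t$-decomposition in $\hLP(t)$: one must ensure that iterated subtraction of lower-degree basis elements gives a well-defined element of the completion rather than only a formal expansion. This is exactly the content of \cite{davison2019strong} and rests on the finiteness properties of the dominance order $\prec_t$ (bounded descending chains once a leading degree is fixed). With that convergence in hand, the Kazhdan--Lusztig elimination---bar-invariance kills every correction landing in $\mm$---closes the argument, and the same mechanism will later be used to reduce the uniqueness of the whole triangular basis to this single-degree statement.
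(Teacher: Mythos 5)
Your argument is internally sound for the statement you chose to prove, but that statement is not the lemma the paper intends, and the difference is exactly where the content lies. You took the reference family for the $(\prec_{t},\mm)$-decomposition to be $\can$ itself (adding bar-invariance, which is indeed needed for uniqueness to hold at all). With that reading the lemma collapses to the tautology $f_{g}=\can_{g}$: existence is ``take $\can_{g}$'' and uniqueness is your bar-invariance computation. But the paper's $f_{g}$ are the triangular functions \emph{with respect to the distinguished functions}: the defining property is bar-invariance together with $f_{g}\urtriangle_{(\prec_{t},\mm)}\inj$, a decomposition into the family $\inj=\{\inj_{g'}\}$ (see the sentence immediately following the lemma, Remark \ref{rem:KL_basis}, and \cite[Section 6.1]{qin2020dual}; the printed $\can$ should be read as $\inj$). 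This matters because $\inj$ is built from the seed data alone --- normalized products of cluster variables and injective cluster variables --- with no reference to any basis. That independence is what makes the subsequent lemma (``the triangular basis is unique if it exists'') non-circular: if $\can$ and $\can'$ are two triangular bases with respect to $t$, each of $\can_{g}$, $\can'_{g}$ is bar-invariant and, by inverting the unitriangular transition $\inj\urtriangle_{(\prec_{t},\mm)}\can$ (possible since $\mm$ is closed under sums and products, with convergence in $\hLP(t)$), has a $(\prec_{t},\mm)$-decomposition into $\inj$; hence both equal the same $f_{g}$. Under your definition $f_{g}$ is defined \emph{through} $\can$, so no comparison of two candidate bases can be extracted --- your closing claim, that this mechanism later yields uniqueness of the whole basis, does not go through in your formulation.

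Under the intended reading, two genuine pieces of work appear that your proof avoids. Existence is the infinite-step Kazhdan--Lusztig recursion starting from $\inj_{g}$ (this is the paper's proof, per Remark \ref{rem:KL_basis}): it requires knowing that the bar involution acts unitriangularly on the distinguished functions, $\overline{\inj_{g}}=\inj_{g}+\sum_{g'\prec_{t}g}a_{g'}\inj_{g'}$ --- note the $\inj_{g'}$ are \emph{not} bar-invariant, being normalizations of noncommutative products --- together with convergence of the recursion in $\hLP(t)$, which is where the unboundedness of $\prec_{t}$ and the completion genuinely enter. Uniqueness likewise cannot be run coefficient-wise as you do: from $f_{g}-f'_{g}=\sum c_{g'}\inj_{g'}$ one cannot conclude $\overline{c_{g'}}=c_{g'}$, because the bar involution does not fix the $\inj$-expansion term by term; one must instead compare $\prec_{t}$-maximal coefficients using the unitriangular bar action (or first convert to a $\can$-expansion, which presupposes the basis whose uniqueness the section is establishing). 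Your key mechanism --- uniqueness of pointed $\prec_{t}$-decompositions plus $\mm\cap\overline{\mm}=\{0\}$ --- is the right engine and is used by the paper at the leading-coefficient step, but as written your proof establishes only the tautological $\can$-version of the lemma, not the statement the section needs.
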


$f_{g}$ are called the triangular functions (with respect to the
distinguished functions). 

\begin{Rem}\label{rem:KL_basis}

$f_{g}$ is computed from $\inj$ by an infinite step version of the
recursive algorithm for constructing \textbf{Kazhdan-Lusztig bases}.
The algorithm is infinite because $\prec_{t}$ is unbounded on $\Z^{I}$.

A priori, it is a formal Laurent series and depends on the choice
of $t$.

In general, $\inj=\{\inj_{g}|g\in\Z^{I}\}$ is NOT a basis.

\end{Rem}

If the triangular functions form a basis of $\upClAlg(t)$, it is
called the \textbf{weakly triangular basis} with respect to $t$.
Note that the triangular basis $\can$ must be weakly triangular.
So we obtain the following result.

\begin{Lem}

The triangular basis $\can$ with respect to $t$ is unique if it
exists.

\end{Lem}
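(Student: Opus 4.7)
The plan is to show that any two triangular bases $\can$ and $\can'$ with respect to $t$ must coincide by combining the $(\prec_t, \mm)$-decompositions of the distinguished functions $\inj_g$ in each basis with the bar-invariance of basis elements.

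First, I would expand each $\can'_g$ in its unique $\prec_t$-decomposition over $\can$,
\[
\can'_g \;=\; \can_g + \sum_{h \prec_t g} c_{g,h}\, \can_h,
\]
a finite $\kk$-linear combination since $\can'_g$ is a Laurent polynomial in $\upClAlg(t)$. Because both sides and every $\can_h$ are bar-invariant, the uniqueness of the $\prec_t$-decomposition into pointed elements forces each coefficient $c_{g,h}$ to be bar-invariant.

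Next, I would apply the triangularity axiom to $\inj_g$ in both bases. Writing
\[
\inj_g \;=\; \can_g + \sum_{h \prec_t g} b_{g,h}\, \can_h \;=\; \can'_g + \sum_{h \prec_t g} b'_{g,h}\, \can'_h
\]
with $b_{g,h}, b'_{g,h} \in \mm$, substituting the change of basis into the right-hand equality and matching coefficients of $\can_{h_0}$ with the left-hand expansion (legitimate by the same uniqueness of $\prec_t$-decomposition) yields the recursion
\[
c_{g,h_0} \;=\; (b_{g,h_0} - b'_{g,h_0}) \;-\; \sum_{h_0 \prec_t h' \prec_t g} b'_{g,h'}\, c_{h',h_0}.
\]
Inducting along $\prec_t$ on the interval $[h_0, g]$, and using that $\mm$ is closed under addition and multiplication, I conclude $c_{g,h_0} \in \mm$. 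Combined with bar-invariance, one has $c_{g,h_0} \in \mm \cap \overline{\mm} = \{0\}$, and hence $\can'_g = \can_g$.

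The main technical point to address is the well-foundedness of this induction: one needs the $\prec_t$-interval $[h_0, g]$ to be finite, so that the recursion terminates at each step and the $\mm$-closure argument produces genuine finite sums rather than formal series. This finiteness is built into the pointedness of Laurent polynomials in $\upClAlg(t)$, through the finitely generated semigroup structure of the cone below $g$ in $\Z^I$; if one prefers to avoid this combinatorial check, the same conclusion can be phrased in the completion $\hLP(t)$ using the convergence of pointed $\prec_t$-decompositions already invoked above.
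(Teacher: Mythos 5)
Your proposal is correct, and it reaches the conclusion by a route that is organized differently from the paper's, even though the engine is the same Kazhdan--Lusztig mechanism anchored on the basis-independent family $\inj$. The paper factors the argument through its preceding lemma: the triangular functions $f_{g}$ (the bar-invariant elements of $\hLP(t)$ admitting a $(\prec_{t},\mm)$-decomposition over the distinguished functions $\inj$) are unique, being produced by the KL-type recursion; any triangular basis is \emph{weakly triangular}, i.e.\ $\can_{g}=f_{g}$ for every $g$, which follows by inverting the unitriangular system $\inj\urtriangle_{(\prec_{t},\mm)}\can$ (inversion preserves off-diagonal coefficients in $\mm$ since $\mm$ is closed under sums and products); uniqueness of $\can$ is then immediate because $\{f_{g}\}$ is defined without reference to any basis. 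You instead never introduce $f_{g}$: you compare two hypothetical bases directly, decompose each $\inj_{g}$ over both, and run the KL induction on the change-of-basis coefficients $c_{g,h}$, killing them via bar-invariance together with $\mm\cap\overline{\mm}=\{0\}$. What your version buys is self-containedness: every sum is a finite expansion in a basis of $\upClAlg(t)$, so the completion $\hLP(t)$ and the weak-triangularity notion are never needed. What it costs is exactly the point you flag: well-foundedness of the induction along $\prec_{t}$. This is not a gap in the present setting, but the correct justification is not quite the one you sketch --- it is that compatibility of $(\tB(t),\Lambda(t))$ forces $\tB(t)$ to have full column rank, so the monoid $\tB\N^{I_{\ufv}}$ is pointed and every dominance interval $\{h':h_{0}\preceq_{t}h'\preceq_{t}g\}$ is finite; mere finite generation of the cone below $g$ would not suffice without this pointedness/injectivity. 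The paper's packaging, by contrast, pays the potential-infiniteness tax once in $\hLP(t)$ and receives in exchange the reusable canonical objects $f_{g}$, which it also needs conceptually elsewhere (the Kazhdan--Lusztig remark and the notion of weakly triangular basis).
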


\section{Tropical transformations, common triangular basis}

For any $t,t'\in\Delta^{+}$, we have some piecewise linear bijective
map \cite{FockGoncharov09}, called the \textbf{tropical transformation}:

\begin{align*}
\phi_{t',t} & :\Mc(t)\simeq\Mc(t').
\end{align*}
For any $g\in\Mc(t)$, we denote $g':=\phi_{t',t}g$. It is known
that $\phi_{t_{3},t_{2}}\phi_{t_{2},t_{1}}=\phi_{t_{3},t_{1}}$.

For any $z\in\LP(t)\cap\LP(t')$, we say $z$ is compatibly pointed
at $t,t'$ if $\exists g\in\Mc(t)$, such that $z$ is $g$-pointed
in $\LP(t)$ and $g'$-pointed in $\LP(t')$. We say a collection
$\{z_{1},z_{2},\ldots\}$ is compatibly pointed at $t,t'$ if each
element $z_{i}$ is.

We introduce the set of tropical points 
\begin{align*}
\tropSet & :=\sqcup_{t\in\Delta^{+}}\Mc(t)/\sim
\end{align*}
where the equivalence relation $\sim$ is generated by the bijections
$\phi$. Let $[g]$ denote the equivalence class of $g$.

We say $z\in\upClAlg$ is $[g]$-pointed if it is pointed in each
$\LP(t)$ and $[\deg^{t}z]=[g]$ for any $t$.

\begin{Lem}[{\cite{qin2017triangular}}]

For any $k\in I_{\ufv}$, the set of distinguished functions $\inj$
with respect to $t$ is compatibly pointed at $t,\mu_{k}t$.

\end{Lem}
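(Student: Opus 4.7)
My plan is to show, for each $g \in \Mc(t) = \Z^I$, that the distinguished function
\[
\inj_g = [p_g * X^{[g_\ufv]_+} * I^{[-g_\ufv]_+}]^t
\]
is compatibly pointed at $t$ and $t' := \mu_k t$, i.e., that its degree in $\LP(t')$ equals $\phi_{t',t}(g)$. The argument combines two ingredients: first, each individual factor in the twisted product---a frozen monomial, a cluster variable $X_i(t)$, or an injective variable $I_j(t)$---is itself compatibly pointed at $t$ and $t'$; and second, $\phi_{t',t}$ is linear on a common cone containing the degrees of all the factors, so it is additive on the sum. Then
\[
\phi_{t',t}(\deg^t \inj_g) = \sum_{\text{factors } z} \phi_{t',t}(\deg^t z) = \sum_{\text{factors } z} \deg^{t'} z = \deg^{t'} \inj_g.
\]

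For the first ingredient, the frozen variables are preserved by $\mu_k$ and their degrees lie in $\Z^{I_\fv}$, which $\phi_{t',t}$ fixes pointwise. For $i \neq k$, $X_i(t) = X_i(t')$, and $\phi_{t',t}(f_i) = f_i$ on these indices follows directly from the tropical mutation formula. For $X_k(t)$, the exchange relation expresses it as a Laurent polynomial in $\LP(t')$ whose $t'$-degree can be read off and matched against $\phi_{t',t}(f_k)$. The injective variables $I_j(t) = X_{\sigma j}(t[1])$ are cluster variables in the seed $t[1]$, and their compatibility at $t$ and $t'$ reduces to the one-step mutation formula for their $g$-vectors. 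For the second ingredient, the crucial combinatorial point is that $\phi_{t',t}$ is piecewise linear with the unique breaking hyperplane $\{g_k = 0\}$, hence linear on each closed half-space $\{g_k \geq 0\}$ and $\{g_k \leq 0\}$. Inspecting the factors of $\inj_g$: the frozen factor $p_g$, every $X_i(t)$ for $i \neq k$, and every $I_j(t)$ for $j \neq k$ have $k$-th degree coordinate $0$; and exactly one of $X_k(t)^{[g_k]_+}$ or $I_k(t)^{[-g_k]_+}$ appears, contributing $[g_k]_+$ or $-[-g_k]_+$ respectively, both of which share the sign of $g_k$. Hence all factor degrees lie in one common closed half-space where $\phi_{t',t}$ is additive.

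The main obstacle is the verification for the injective variable $I_k(t)$, whose $g_k$-coordinate equals $-1$ and whose transformation under $\mu_k$ is less local than that of a nearby cluster variable. I would handle it by computing $\deg^{t'} I_k(t)$ explicitly, either by a direct Laurent expansion in $\LP(t')$ or by composing the tropical transformations $\phi_{t,t[1]}$ and $\phi_{t',t[1]}$ through the $t[1]$-seed, and then matching the result against the predicted value $\phi_{t',t}(\deg^t I_k(t))$. Once this case is settled, the combinatorial cone argument above completes the proof.
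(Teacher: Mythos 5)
Your overall skeleton---write $\inj_{g}$ as a normalized product of a frozen factor, cluster variables of $t$, and injective variables; show each factor is compatibly pointed at $t,\mu_{k}t$; then use that all factor degrees have $k$-th coordinate of the same sign as $g_{k}$, so that the piecewise-linear map $\phi_{t',t}$ (linear on each of the half-spaces $\{g_{k}\geq0\}$ and $\{g_{k}\leq0\}$) is additive on them---is exactly the argument intended here. The paper itself gives no proof of this Lemma (it cites \cite{qin2017triangular}), but the very same ``sign-coherence of degrees'' computation is what it invokes inside the proof of Proposition \ref{prop:admissibility_to_compatibility}, so your structure is the right one.

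The genuine gap is your treatment of the injective factors. You assert that for $j\neq k$ the compatibility of $I_{j}(t)$ at $t,t'$ ``reduces to the one-step mutation formula for their $g$-vectors,'' and you single out $I_{k}(t)$ as the only hard case. In fact \emph{all} of the $I_{j}(t)=X_{\sigma j}(t[1])$ are cluster variables of the distant seed $t[1]$, not of $t$ or $t'$, so no one-step computation is available for any of them: the claim $\deg^{t'}I_{j}(t)=\phi_{t',t}\deg^{t}I_{j}(t)$ is an instance of the theorem that cluster monomials are compatibly pointed at all seeds (the $g$-vector/tropical-transformation theorem, i.e.\ the Theorem quoted in the paper right after this Lemma, due to \cite{Demonet10} in the cases relevant here and to \cite{gross2018canonical} in general), and this is precisely the nontrivial input the proof must cite. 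Your fallback for $I_{k}(t)$---composing $\phi_{t,t[1]}$ and $\phi_{t',t[1]}$---silently assumes compatible pointedness at the pairs $(t[1],t)$ and $(t[1],t')$, which is the same global input, so it does not make the argument more self-contained; and a ``direct Laurent expansion'' of $I_{k}(t)$ in $\LP(t')$ is not feasible in general. Once you explicitly invoke that theorem for the factors $I_{j}(t)$, your cone argument closes the proof; as written, this step is unjustified. A smaller omission: ``compatibly pointed'' requires honest pointedness (leading coefficient $1$) in both $\LP(t)$ and $\LP(t')$, so in the quantum setting you must also check that the two normalizations agree on these twisted products, $[\ ]^{t}=[\ ]^{t'}$; this uses the compatibility of $(\tB,\Lambda)$ together with sign-coherence, and is exactly the point the paper flags with ``one can check that the normalization $[\ ]^{t'}$ above equals $[\ ]^{t}$.''
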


For our application to quantum groups, we need to know the following
result hold for $\kk[N_{-}(w)]$, which was verified by \cite{Demonet10}.
General cases were proved in \cite{gross2018canonical}.

\begin{Thm}

Cluster monomials are $[g]$-pointed at distinct $[g]$.

\end{Thm}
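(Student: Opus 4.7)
The plan is to establish two assertions. First, for any cluster monomial $z=X^{m}(s)$ in a seed $s\in\Delta^{+}$, I will show that $z\in\LP(t)$ is pointed in every seed $t\in\Delta^{+}$ with degree $\deg^{t}z=\phi_{t,s}(m)$; that is, its $g$-vector transforms according to the tropical transformation. Second, I will show that distinct cluster monomials give rise to distinct classes in $\tropSet$. Together these are precisely the assertion that cluster monomials are $[g]$-pointed at pairwise distinct $[g]$.

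For the first assertion, the cocycle relation $\phi_{t'',t'}\phi_{t',t}=\phi_{t'',t}$ reduces matters to a single mutation step: if $z$ is $g$-pointed in $\LP(t)$, then $z$ is $\phi_{\mu_{k}t,t}(g)$-pointed in $\LP(\mu_{k}t)$. Starting from a seed $s$ in which $z=X^{m}(s)$ is tautologically $m$-pointed, I would apply mutations one at a time and expand using the exchange formula for $X_{k}'$. The delicate point is that the resulting Laurent expansion in $\LP(\mu_{k}t)$ has a unique leading monomial in the dominance order $\prec_{\mu_{k}t}$, i.e.\ no cancellation destroys pointedness. This non-cancellation is the \emph{sign-coherence} of $c$-vectors: for $\kk[N_{-}(w)]$ it was established by Demonet \cite{Demonet10} through additive categorification by preprojective algebras, and in full generality it follows from the construction of theta bases by Gross--Hacking--Keel--Kontsevich \cite{gross2018canonical} via scattering diagrams.

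For the second assertion, suppose cluster monomials $X^{m}(s)$ and $X^{m'}(s')$ share the class $[g]\in\tropSet$. Fixing any seed $t$, the first assertion implies that both are $g$-pointed in $\LP(t)$ with the same degree $g=\phi_{t,s}(m)=\phi_{t,s'}(m')$ and with trivial normalization. Since a $\prec_{t}$-decomposition into pointed Laurent elements is unique, the two cluster monomials must coincide in $\LP(t)$, hence in $\upClAlg$. So the injectivity of the $g$-vector map amounts to the linear independence of cluster monomials, which is again a theorem of \cite{gross2018canonical} in general and can be extracted from \cite{Demonet10} in the case of $\kk[N_{-}(w)]$.

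The main obstacle is sign-coherence: once it is available, compatibility with tropical transformations and distinctness of tropical classes both follow by essentially formal arguments, but proving sign-coherence itself requires substantial input, either through categorification (Demonet, for the unipotent setting) or through scattering diagrams and theta functions (Gross--Hacking--Keel--Kontsevich, in general).
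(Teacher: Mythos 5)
Your first assertion (cluster monomials are pointed in every seed, with degrees related by the tropical transformations, reduced via the cocycle property to a single mutation and then to sign-coherence) is a correct reduction, and in spirit it matches the paper: the paper gives no proof of this theorem at all, deferring exactly to \cite{Demonet10} for $\kk[N_{-}(w)]$ and to \cite{gross2018canonical} in general, the same sources you invoke for the hard input.

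The genuine gap is in your second assertion. You claim that if two cluster monomials are $g$-pointed in $\LP(t)$ with the same degree $g$, then ``since a $\prec_{t}$-decomposition into pointed Laurent elements is unique, the two cluster monomials must coincide in $\LP(t)$.'' This does not follow. Uniqueness of $\prec_{t}$-decompositions (in the sense of \cite{qin2019bases}) says that, for a \emph{fixed} family of pointed elements with pairwise distinct degrees, the coefficients expressing a given element are unique; it says nothing about two \emph{different} pointed elements sharing one degree. Indeed $X^{g}$ and $X^{g}\cdot(1+Y^{n})$ are both $g$-pointed and distinct, and, more seriously, even being $[g]$-pointed --- compatibly pointed at \emph{all} seeds at the same tropical point --- does not determine an element of $\upClAlg$: this is precisely why the theta basis, the generic basis and the common triangular basis can all be parametrized by $\tropSet$ and yet differ as sets of functions. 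Hence ``two cluster monomials with the same tropical point coincide as elements'' cannot be obtained by your formal argument; it is itself the hard part of the Fomin--Zelevinsky $g$-vector conjecture, proven in \cite{gross2018canonical} by identifying cluster monomials with theta functions (two theta functions with equal $g$-vector are literally the same function), and in categorified settings by showing the $g$-vector determines the corresponding rigid object. Your subsequent appeal to linear independence only handles the easy residual step (distinct formal cluster monomials give distinct algebra elements), so the essential content is missing; the repair is to cite the distinctness-of-$g$-vectors theorem itself, as the paper does, rather than to derive it from uniqueness of pointed decompositions.
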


\begin{Def}

If a $\kk$-basis $\can$ of $\upClAlg$ is the triangular basis with
respect to all $t\in\Delta^{+}$, and $\can=\{\can_{[g]}|[g]\in\tropSet\}$
such that $\can_{[g]}$ are $[g]$-pointed, then $\can$ is called
the \textbf{common triangular basis}.

\end{Def}

\section{Admissibility and compatibility}

We will show how a triangular basis is propagated on adjacent seeds.
Choose and fix any $t\in\Delta^{+}$ and $t'=\mu_{k}t$.

\begin{Def}

The triangular basis $\can^{t}$ with respect to $t$ is admissible
in direction $k$ if it contains the cluster monomials $X_{k}(t')^{d},I_{k}(t')^{d}$,
$d\in\N$.

\end{Def}

\begin{Prop}[{Admissibility implies compatibility \cite[Proposition 6.4.3]{qin2020dual}}]\label{prop:admissibility_to_compatibility}

If the triangular basis $\can^{t}$ is admissible in direction $k$,
then the following claims are true.
\begin{itemize}
\item $\can^{t}$ is compatibly pointed at $t,t'$.
\item $\can^{t}$ is the triangular basis $\can^{t'}$ with respec to $t'$.
\end{itemize}
\end{Prop}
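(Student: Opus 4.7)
The plan is to first establish compatibility of pointing between $t$ and $t'$, then verify each axiom of the triangular basis at $t'$ for $\can^t$.

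\textbf{Step 1 (Compatibility).} I would show that each $\can_g^t$ is $\phi_{t',t}(g)$-pointed at $t'$. The input is the prior lemma that the distinguished functions $\inj^t$ are compatibly pointed at $t, t'$. The admissibility hypothesis supplies that $X_k(t')^d$ and $I_k(t')^d$ lie in $\can^t$ for all $d \in \N$ --- these are precisely the basis elements whose degree at $t'$ differs nontrivially from that at $t$ under $\phi_{t',t}$ (in the remaining directions $i \ne k$ the cluster and injective variables at $t$ and $t'$ coincide up to frozen factors). Combining this explicit data with bar-invariance and the uniqueness of pointed $\prec_t$-decompositions (which reconstructs $\can_g^t$ from $\inj_g^t$ by the Kazhdan--Lusztig style recursion of Remark \ref{rem:KL_basis}), compatibility propagates from the distinguished functions to the whole triangular basis.

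\textbf{Step 2 (Axioms that follow easily).} Given Step 1, three of the four triangular basis axioms at $t'$ are straightforward: the basis is indexed by $\Mc(t')$ via the bijection $\phi_{t',t}$; bar-invariance is intrinsic to each $\can_g^t$ and preserved by the change of Laurent coordinates; and cluster monomials of $t'$ and $t'[1]$ belong to $\can^t$, since the directions $i \ne k$ are unchanged from $t, t[1]$ and the direction $k$ is handled by admissibility.

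\textbf{Step 3 (Triangularity identity).} The core step is to verify
\begin{align*}
[X_i(t') * \can_{g'}^t]^{t'} \urtriangle_{(\prec_{t'}, \mm)} \can^t \qquad \forall i \in I,\ g' \in \Mc(t').
\end{align*}
For $i \ne k$, since $X_i(t') = X_i(t)$, I would transfer the corresponding identity at $t$ using Step 1 together with the fact that, on the relevant chamber, the piecewise linear map $\phi_{t',t}$ intertwines the two dominance orders. For $i = k$, since $X_k(t')$ is itself a basis element $\can_h^t$ by admissibility, I would expand $\can_h^t * \can_{g'}^t$ via its $\prec_t$-triangular decomposition and translate term by term to $t'$, using Step 1 to identify the resulting pointed summands as $\can^t$-elements with the correct $t'$-degrees. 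The coefficients remain in $\mm = q^{-\Hf}\Z[q^{-\Hf}]$ because the input decomposition at $t$ has coefficients in $\mm$ and both bar-invariance and the pointing property are preserved by the transfer.

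\textbf{Main obstacle.} I expect the hardest part to be the dominance-order comparison in Step 3 when $i = k$: the orders $\prec_t$ and $\prec_{t'}$ are genuinely different and only align after restriction to cones where $\phi_{t',t}$ is linear. The argument will hinge on tracking the rows of $\tB(t), \tB(t')$ (equivalently $c$-vectors) through mutation, and confirming that every monomial dominated at $t$ by $\deg^t(X_k(t') * \can_{g'}^t)$ transports under $\phi_{t',t}$ to a monomial dominated at $t'$ by $\deg^{t'}(X_k(t') * \can_{g'}^t)$. This controlled compatibility of dominance orders is what ultimately lets $(\prec_t, \mm)$-triangularity upgrade to $(\prec_{t'}, \mm)$-triangularity.
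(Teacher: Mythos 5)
Your high-level outline (compatibility first, then triangularity at $t'$) matches the paper's, but the mechanism you propose for the decisive step is not the paper's, and it fails. Both your Step 3 and your ``Main obstacle'' paragraph rest on the claim that $\phi_{t',t}$ intertwines the two dominance orders, at least on chambers of linearity, so that degrees dominated at $t$ transport to degrees dominated at $t'$. This is false, even strictly inside a linearity chamber. Take $I=I_{\ufv}=\{1,2\}$, $\tB(t)=\left(\begin{smallmatrix}0&1\\-1&0\end{smallmatrix}\right)$, $t'=\mu_{1}t$, so $\tB(t')=-\tB(t)$. The degrees $g=(-1,0)$ and $\eta=g+\tB(t)e_{1}=(-1,-1)$ satisfy $\eta\prec_{t}g$ and both lie in the chamber $\{g_{1}\leq 0\}$ on which $\phi_{t',t}$ is linear, yet $\phi_{t',t}(g)=(1,-1)$ and $\phi_{t',t}(\eta)=(1,-2)$ satisfy $\phi_{t',t}(g)=\phi_{t',t}(\eta)+\tB(t')e_{1}$, i.e.\ $\phi_{t',t}(g)\prec_{t'}\phi_{t',t}(\eta)$: the order is \emph{reversed}. (Worse, the $\phi_{t',t}$-image of a set bounded above in $\prec_{t}$ can contain infinite increasing $\prec_{t'}$-chains, so a transferred infinite decomposition need not even converge in $\hLP(t')$.) This destroys the term-by-term transfer in Step 3, and the same difficulty undermines Step 1, whose ``propagation'' of compatibility from $\inj^{t}$ to $\can^{t}$ is asserted without a mechanism; tellingly, your Step 1 makes no essential use of admissibility (its stated role there, that only the direction-$k$ elements ``have degrees differing nontrivially'' under $\phi_{t',t}$, is incorrect, since the tropical transformation changes essentially all degrees), whereas compatibility genuinely fails without that hypothesis. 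A further local error: for $i=k$ you expand $\can_{h}^{t}*\can_{g'}^{t}$ via a $(\prec_{t},\mm)$-decomposition, but the definition of a triangular basis only provides $\mm$-control for products $X_{i}(t)*\can_{g}$, not for products of two arbitrary basis elements.

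The paper's proof supplies exactly the two ideas your proposal is missing. First, admissibility enters through a substitution argument aimed at the distinguished functions of the \emph{new} seed: writing $\inj_{g'}^{t'}$ as a normalized product containing the factor $(X_{k}')^{g_{k}'}$ (resp.\ $(I_{k}')^{-g_{k}'}$), admissibility says these factors lie in $\can^{t}$, and substituting their decompositions gives $\inj^{t'}\urtriangle_{(\prec_{t},\mm)}\inj^{t}$ (see \cite[Lemma 6.2.3]{qin2017triangular}); inverting, one gets $\can_{g}^{t}=\inj_{g'}^{t'}+\sum_{\eta\prec_{t}g}b_{g,\eta}\inj_{\eta'}^{t'}$ with $b_{g,\eta}\in\mm$. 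Second, after this decomposition is transferred to $\hLP(t')$ (using that $\inj^{t'}$ is compatibly pointed at $t,t'$ --- the Lemma applied to the seed $t'$ --- together with \cite{qin2019bases}), the $\prec_{t'}$-leading term is identified not by any order comparison but by \emph{bar-invariance}: the leading Laurent coefficient of the bar-invariant element $\can_{g}^{t}$ must itself be bar-invariant, and among the available coefficients $\{1\}\cup\mm$ only $1$ qualifies, so the leading term is $\inj_{g'}^{t'}$. This yields compatibility and simultaneously exhibits $\can^{t}$ as weakly triangular with respect to $t'$; part (ii) then follows from \cite[Proposition 6.2.8]{qin2020dual}, which is also where the containment of all cluster monomials of $t',t'[1]$ --- dismissed as ``straightforward'' in your Step 2 --- is actually established. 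Without the decomposition into $\inj^{t'}$ and the bar-invariance argument, the upgrade from $(\prec_{t},\mm)$- to $(\prec_{t'},\mm)$-triangularity that your proof needs has no valid mechanism.
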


\begin{proof}

(i) In $\LP(t')$, we have 
\begin{align*}
\inj_{g'}^{t'} & =\begin{cases}
[p_{g}*(X')^{d_{X}}*(X_{k}')^{g_{k}'}*(I')^{d_{I}}]^{t'} & g_{k}'\geq0\\{}
[p_{g}*(X')^{d_{X}}*(I_{k}')^{-g_{k}'}*(I')^{d_{I}}]^{t'} & g_{k}'<0
\end{cases},
\end{align*}
where $d_{X},d_{I}\in\N^{I\backslash\{k\}}$. In $\LP(t)$, by the
\textbf{sign-coherence }of degrees, one can check that the normalization
$[\ ]^{t'}$ above equals $[\ ]^{t}$, which implies the following
in $\LP(t)$:
\begin{align*}
\inj_{g'}^{t'} & =\begin{cases}
[p_{g}*(X)^{d_{X}}*\red{(X_{k}')^{g_{k}'}}*(I)^{d_{I}}]^{\red{t}} & g_{k}'\geq0\\{}
[p_{g}*(X)^{d_{X}}*\red{(I_{k}')^{-g_{k}'}}*(I)^{d_{I}}]^{\red{t}} & g_{k}'<0
\end{cases}.
\end{align*}
Admissibility implies that the factors $\red{(X_{k}')^{g_{k}'}}$
and $\red{(I_{k}')^{-g_{k}'}}$ are contained in $\can^{t}$, thus
they have $(\prec_{t},\mm)$-decomposition into $\inj^{t}$. Substituting
these factors by their decomposition and using the triangularity $[X^{m}*I^{\tilde{m}}]^{t}\urtriangle_{(\prec_{t},\mm)}\can^{t}\urtriangle_{(\prec_{t},\mm)}\inj^{t}$
for any $m,\tilde{m}\in\N^{I_{\ufv}}$, we can deduce $\inj^{t'}\urtriangle_{(\prec_{t},\mm)}\inj^{t}$,
see \cite[Lemma 6.2.3]{qin2017triangular}.

Combined with $\inj^{t}\urtriangle_{(\prec_{t},\mm)}\can^{t}$, we
get $\inj^{t'}\urtriangle_{(\prec_{t},\mm)}\can^{t}$. Taking the
inverse, we get $\can^{t}\urtriangle_{(\prec_{t},\mm)}\inj^{t'}$.
So we obtain the following $(\prec_{t},\mm)$-decomposition in $\hLP(t)$
for any $\can_{g}^{t}$:

\begin{align}
\can_{g}^{t} & =\inj_{g'}^{t'}+\sum_{\eta\prec_{t}g}b_{g,\eta}\inj_{\eta'}^{t'}.\label{eq:can_inj_decomposition}
\end{align}
where $b_{g,\eta}\in\mm$. (Recall that $\eta'=\phi_{t',t}\eta$.)

By \cite{qin2019bases}, since $\inj^{t'}$ is compatibly pointed
at $t,t'$, \eqref{eq:can_inj_decomposition} is also a $\prec_{t'}$-decomposition
in $\hLP(t')$. By the bar-invariance of $\can_{g}^{t}$ in $\hLP(t')$,
its $\prec_{t'}$-leading term (i.e., the term with the highest $\prec_{t'}$
Laurent degree) can only be the contribution from $\inj_{g'}^{t'}$
whose coefficient is $1$, because the coefficients of all the other
terms are NOT bar-invariant. Therefore, it is a $(\prec_{t'},\mm)$-decomposition
such that the $\prec_{t'}$-leading term is $\inj_{g'}^{t'}$. 

In particular, $\can_{g}^{t}$ is $g'$-pointed in $\LP(t')$.

(ii) We have shown in (i) that $\can^{t}$ is weakly triangular with
respect to $t'$. Combining with the fact that $\can^{t}$ is triangular
with respect to $t$ and compatibly pointed at $t,t'$, we can deduce
that $\can^{t}$ is further triangular with respect to $t'$, see
\cite[Proposition 6.2.8]{qin2020dual}.

\end{proof}

\begin{Prop}[{Compatibility implies admissibility \cite[Proposition 6.4.5]{qin2020dual}}]\label{prop:compatibility_to_admissibility}

The following claims are true.
\begin{enumerate}
\item Assume $z\in\upClAlg$ is compatibly pointed at $t,t[-1]$, and $\deg^{t}z=\deg^{t}(X_{k}')^{d}$
for some $d\geq0$, then $z=(X_{k}')^{d}$.
\item Assume $z\in\upClAlg$ is compatibly pointed at $t[1],t$, and $\deg^{t}z=\deg^{t}(I_{k}')^{d}$
for some $d\geq0$, then $z=(I_{k}')^{d}$.
\end{enumerate}
\end{Prop}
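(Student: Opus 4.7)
The plan is to prove (1) by showing the difference $w := z - (X_k')^d$ vanishes, then deduce (2) by the symmetric argument obtained via the shift $[1]$ (which exchanges cluster variables with injective cluster variables and swaps the roles of $t$ and $t[-1]$); I therefore concentrate on (1). First I would observe that $(X_k')^d$ is a cluster monomial in the seed $\mu_k t$, so by the theorem of \cite{gross2018canonical} recalled above it is $[g]$-pointed, and in particular compatibly pointed at $t$ and $t[-1]$ with $\deg^{t[-1]}(X_k')^d = \phi_{t[-1],t}(g) =: g[-1]$. Combined with the hypotheses on $z$, this shows that the Laurent expansions of $z$ and $(X_k')^d$ in $\LP(t)$ share the leading term $X^{g}$, and their expansions in $\LP(t[-1])$ share the leading term $X^{g[-1]}$. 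Hence if $w \neq 0$, every monomial in its expansion in $\LP(t)$ has degree strictly $\prec_t g$, while every monomial in its expansion in $\LP(t[-1])$ has degree strictly $\prec_{t[-1]} g[-1]$.

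Next, using uniqueness and convergence of $\prec_t$-decompositions of $w$ into pointed elements in the completion $\hLP(t)$ (the same mechanism recalled between Lemma~4.2 and Remark~4.3), I would extract a well-defined top $\prec_t$-degree $\eta \prec_t g$ with respect to which $w$ is $\eta$-pointed after rescaling. The same reasoning in $\hLP(t[-1])$ produces a top degree $\eta^{\#} \prec_{t[-1]} g[-1]$. Because $w$ is a single element of $\upClAlg$, its leading Laurent monomial at $t$ transforms under the birational identification $\mu^{*}_{t[-1],t}$ into the leading monomial at $t[-1]$, forcing $\eta^{\#} = \phi_{t[-1],t}(\eta)$. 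Thus a hypothetical $\eta$ would have to satisfy the two strict inequalities $\eta \prec_t g$ and $\phi_{t[-1],t}(\eta) \prec_{t[-1]} g[-1]$ simultaneously.

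The remaining step, and the main obstacle, is a tropical rigidity argument ruling out any such $\eta$. Writing $g = d \cdot g_k^{+}$ with $g_k^{+} := -f_k + \sum_i [b_{ik}]_+ f_i$ the degree of $X_k'$ at $t$, I would analyse the piecewise-linear map $\phi_{t[-1],t}$ in a neighbourhood of $g$, using sign-coherence of the $c$-vectors along the mutation sequence from $t[-1]$ to $t$. The point is that degrees of cluster variables at $t$ sit on the boundary of the $\prec_t$-cone in a way precisely compatible with the break loci of $\phi_{t[-1],t}$, so that the relative interior of $\{\eta : \eta \prec_t g\}$ is sent into the complement of $\{\eta' : \eta' \prec_{t[-1]} g[-1]\}$; this is exactly where the ``adjacency-in-shift'' between $t$ and $t[-1]$ is used in an essential way. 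With that incompatibility in hand $\eta$ cannot exist, so $w = 0$ and $z = (X_k')^d$, establishing (1) and hence, by the $[1]$-symmetry, also (2).
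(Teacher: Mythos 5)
Your strategy (subtract, extract a leading degree of the difference $w=z-(X_k')^d$, and derive a tropical contradiction) is not the paper's, and it contains a step that is actually false, not merely unproven. The fatal assertion is that ``because $w$ is a single element of $\upClAlg$, its leading Laurent monomial at $t$ transforms under $\mu^{*}_{t[-1],t}$ into the leading monomial at $t[-1]$, forcing $\eta^{\#}=\phi_{t[-1],t}(\eta)$.'' Tropical transformations relate the degrees of an element at two seeds only when that element is \emph{compatibly pointed} there -- this is a hypothesis imposed on $z$, not an automatic property of elements of $\upClAlg$, and it is not inherited by differences of pointed elements. A concrete counterexample already exists in the $A_2$ cluster algebra ($I=I_{\ufv}=\{1,2\}$, $b_{21}=-b_{12}=1$): the element $w=X_1+X_1'$, with $X_1'=X_1(\mu_1 t)$, is pointed at $t$ with degree $\deg^{t}X_1$ (since $\deg^{t}X_1'\prec_{t}\deg^{t}X_1$) and pointed at $t[-1]$ with degree $\deg^{t[-1]}X_1'$ (since there $\deg^{t[-1]}X_1\prec_{t[-1]}\deg^{t[-1]}X_1'$); because $X_1$ is a cluster variable, $\phi_{t[-1],t}(\deg^{t}X_1)=\deg^{t[-1]}X_1\neq\deg^{t[-1]}X_1'$, so the two leading degrees of $w$ do \emph{not} correspond under $\phi_{t[-1],t}$. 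Your $w$ is exactly of this type, so the contradiction you aim for never gets off the ground. (A smaller, fixable issue: $\prec_t$ is only a partial order, so $w$ need not have a single top degree; one must argue with an arbitrary $\prec_t$-maximal degree.)

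Moreover, the ``tropical rigidity'' statement you defer to the end is asserted, not proved, and it is essentially as deep as the proposition itself: in any case where theta functions form a basis compatibly pointed at all seeds (e.g.\ under \cite{gross2018canonical}), a lattice point $\eta$ with $\eta\prec_{t}g$ and $\phi_{t[-1],t}(\eta)\prec_{t[-1]}g[-1]$ would yield the counterexample $z=(X_k')^d+\vartheta_{[\eta]}$ to the proposition, so excluding such $\eta$ cannot be a routine break-locus computation. The paper's proof avoids both problems by invoking a theorem of \cite{qin2019bases}: there is a bijection $\psi:\Mc(t)\to\Mc(t[-1])$ under which being pointed in $\LP(t[-1])$ is equivalent to being \emph{copointed} in $\LP(t)$. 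Compatible pointedness at the pair $t,t[-1]$ thus becomes a two-sided bound at the single seed $t$: both $z$ and $(X_k')^d$ are $g$-pointed and $\eta$-copointed with the same $g,\eta$, where $\eta-g=d\,\tB e_k$, and this sandwiching controls the \emph{entire} Laurent expansion (not just leading terms), forcing $z=X^{g}\cdot(1+\sum_{0<n<d}c'_{n}Y_{k}^{n}+Y_{k}^{d})$; finally $z\in\LP(\mu_k t)$ pins down the coefficients and gives $z=(X_k')^d$. Any repair of your argument essentially has to replace your two tropical claims by this pointed/copointed duality, at which point you have reproduced the paper's proof.
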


\begin{proof}

By \cite{qin2019bases}, there exists a bijection $\psi:\Mc(t)\simeq\Mc(t[-1])$
such that $z$ is $\eta$-\textbf{copointed} in $\LP(t)$ (i.e., $z=X^{\eta}\cdot(1+\sum_{n<0}c_{n}Y^{n})$)
if and only if $z$ is $\psi(\eta)$-pointed in $\LP(t[-1])$.

(1) By the assumption, $z$ and $(X_{k}')^{d}$ are both $g$-pointed
and $\eta$-copointed for some $g,\eta$: if we write $(X_{k}')^{d}=X^{g}\cdot(1+\sum_{0<n<d}c_{n}Y_{k}^{n}+Y_{k}^{d})$,
then $z=X^{g}\cdot(1+\sum_{0<n<d}c'_{n}Y_{k}^{n}+Y_{k}^{d})$. One
can then deduce from $z\in\LP(t)\cap\LP(t')$ that $z=(X_{k}')^{d}$.

(2) The claim is deduced from (1) by replacing $t$ by $t[1]$.

\end{proof}

\begin{Cor}

Let $\can$ be the triangular basis with respect to some seed $t$.
Assume that one of the following is true:
\begin{enumerate}
\item $\can$ contains all cluster monomials.
\item $\can=\{\can_{[g]}|[g]\in\tropSet\}$ such that $\can_{[g]}$ are
$[g]$-pointed.
\end{enumerate}
Then $\can$ is the common triangular basis.

\end{Cor}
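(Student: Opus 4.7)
The plan is to propagate triangularity from the initial seed $t$ along every mutation, using Propositions \ref{prop:admissibility_to_compatibility} and \ref{prop:compatibility_to_admissibility} as the inductive engine. Concretely, I would do induction on the mutation distance from $t$ in $\Delta^+$: given that $\can$ is already the triangular basis with respect to a seed $t''$, and given admissibility in direction $k$ at $t''$, Proposition \ref{prop:admissibility_to_compatibility} immediately yields that $\can$ is compatibly pointed at $t'',\mu_k t''$ and is the triangular basis with respect to $\mu_k t''$. Since the tropical transformations satisfy $\phi_{t_3,t_1}=\phi_{t_3,t_2}\phi_{t_2,t_1}$, the pairwise compatibilities collected along a mutation path assemble into a single $[g]\in\tropSet$ for each basis element, giving $\can=\{\can_{[g]}\}$ with $\can_{[g]}$ being $[g]$-pointed. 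Hence the whole statement reduces to verifying admissibility in every direction $k$, at every seed reached.

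In case (1), admissibility is essentially free: by hypothesis $\can$ contains every cluster monomial, so in particular it contains $X_k(t'')^d$ and $I_k(t'')^d$ for every adjacent seed $\mu_k t''$ and every $d\in\N$. After one mutation, $\can$ is still the same set and still contains all cluster monomials, so the hypothesis propagates and the induction runs without further input.

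In case (2), the point is to upgrade pointedness to admissibility. Fix $t''$ and $k\in I_{\ufv}$, and set $g:=\deg^{t''}X_k(\mu_kt'')^d$. Since cluster monomials are $[g]$-pointed at distinct $[g]$ (by the theorem invoked before the definition of common triangular basis), $X_k(\mu_kt'')^d$ is itself $[g]$-pointed. The basis element $\can_{[g]}$ is also $[g]$-pointed by hypothesis (2), so in particular both elements are compatibly pointed at $t'',t''[-1]$ with the same $t''$-degree. Proposition \ref{prop:compatibility_to_admissibility}(1) then forces $\can_{[g]}=X_k(\mu_kt'')^d$; the analogous argument using part (2) of the same proposition handles $I_k(\mu_kt'')^d$. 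So $\can$ is admissible in direction $k$ at $t''$, and since hypothesis (2) is preserved under the inductive step (it is a statement about all seeds at once), the induction closes.

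The main obstacle, as I see it, is case (2): one has to recognize that admissibility is not built into hypothesis (2) and must be extracted by matching tropical data between a putative basis element and the cluster monomial of the correct degree, then invoking Proposition \ref{prop:compatibility_to_admissibility} to force equality. A secondary subtlety is ensuring the hypothesis of each inductive step is actually what Proposition \ref{prop:admissibility_to_compatibility} requires; here one must also observe that under (1) the class of bases preserved is ``contains all cluster monomials'', whereas under (2) it is ``indexed by $\tropSet$ with the pointedness structure'' — both propagate, but for different reasons, and each uses Proposition \ref{prop:admissibility_to_compatibility} only to transport triangularity itself.
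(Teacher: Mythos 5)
Your proposal is correct and takes essentially the approach the paper intends: the corollary is stated there without a separate proof precisely because it follows by combining Proposition \ref{prop:admissibility_to_compatibility} and Proposition \ref{prop:compatibility_to_admissibility} in exactly the inductive way you describe (admissibility free from hypothesis (1), or extracted in case (2) by matching tropical degrees of basis elements against cluster monomials via the pointedness theorem, then propagating triangularity seed by seed). Your identification of case (2) as the step requiring Proposition \ref{prop:compatibility_to_admissibility}, and of the cocycle property of the maps $\phi_{t',t}$ as what assembles pairwise compatibility into the $\tropSet$-parametrization, is exactly the intended argument.
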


\section{Similar seeds}

Given any seed $t\in\Delta^{+}$. Let $\dt$ denote another seed (not
necessarily in $\Delta^{+}$) with $\tilde{I}=I_{\ufv}\sqcup\tilde{I}_{\fv}$
such that $I_{\ufv}$ is the set of unfrozen vertices. Let $\tilde{b}_{ij}$
denote the entries of $\tB(\dt)$.

\begin{Def}

We say $\dt$ and $t$ are similar if there exists a permutation $\tau$
on $I_{\ufv}$ such that $b_{ij}=\tilde{b}_{\tau i,\tau j}$ for all
$i,j\in I_{\ufv}$.

\end{Def}

Note that $t$ and $t[1]$ are similar. The permutation $\tau$ acts
on $\Z^{I_{\ufv}}$ linearly such that $\tau f_{i}:=f_{\tau i}$ where
$f_{i}$ denote the $i$-th unit vectors. The following result is
an easy consequence of the correction technique \cite{Qin12}\cite[Section 4.2]{qin2020dual}.

\begin{Lem}\label{lem:similar_triangular}

If $\upClAlg(t)$ has the triangular basis $\can^{t}$ with respect
to $t$, such that $\can_{g_{\ufv}}^{t}=X^{g_{\ufv}}\cdot(1+\sum_{n>0}c_{n}Y^{n})$
for $g_{\ufv}\in\Z^{I_{\ufv}}$, then $\upClAlg(\dt)$ has the triangular
basis $\can^{\dt}$, such that 
\begin{align*}
\can_{g_{\ufv}}^{\dt} & =X^{\tau g_{\ufv}}\cdot(1+\sum_{n>0}c_{n}Y^{\tau n}).
\end{align*}

\end{Lem}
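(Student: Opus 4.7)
The plan is to invoke the correction technique of \cite{Qin12} (see also Section 4.2 of \cite{qin2020dual}), whose point is that the entire triangular structure of a quantum cluster algebra is controlled by the unfrozen--unfrozen block of its $B$-matrix: changes in the frozen rows/columns only affect frozen-monomial prefactors and do not disturb bar-invariance or $(\prec_t, \mm)$-triangularity. Since the unfrozen blocks of $\tB(t)$ and $\tB(\dt)$ agree after relabeling by the permutation $\tau$, my strategy is to transport $\can^t$ to a candidate basis on $\upClAlg(\dt)$ by $\tau$ and then verify the triangular basis axioms one by one.

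First, I would set $\can^{\dt}_{g_\ufv} := X(\dt)^{\tau g_\ufv} \cdot (1 + \sum_{n > 0} c_n Y(\dt)^{\tau n})$ with the same scalar coefficients $c_n$ as in $\can^t_{g_\ufv}$, and extend to an indexing over all of $\Z^{\tilde I}$ by adjoining frozen-monomial prefactors $p_g$ analogous to those appearing in the distinguished functions $\inj^{\dt}_g$. Pointedness in $\LP(\dt)$ is immediate because $\tB(\dt) \tau n$ has, in the unfrozen directions, precisely the $\tau$-image of the unfrozen exponents of $\tB(t) n$. Membership in $\upClAlg(\dt)$ then follows because the correction technique produces a bijection between $\Delta^+_{\dt}$ and $\Delta^+_t$ along which unfrozen mutations match (the unfrozen mutation formula only sees $b_{ik}, b_{ki}$ for $k \in I_\ufv$); the Laurent expansion of $\can^{\dt}_g$ in any seed of $\Delta^+_{\dt}$ is the $\tau$-transport of the expansion of $\can^t_g$ in the corresponding seed of $\Delta^+_t$, up to frozen correction factors.

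I would then check the remaining triangular basis axioms. Bar-invariance transports because the bar involution acts only on the quantum scalars $q^{\pm \Hf}$, and the $c_n$ are bar-invariant in $\LP(\dt)$ for the same reason as in $\LP(t)$. Cluster monomials in $\dt$ and $\dt[1]$ are the $\tau$-images of cluster monomials in $t$ and $t[1]$ under the correction, hence belong to $\can^{\dt}$. The key triangularity relation $[X_i(\dt) * \can^{\dt}_g]^{\dt} = \can^{\dt}_{g + f_i} + \sum_{g' \prec_{\dt} g + f_i} b_{g'} \can^{\dt}_{g'}$ with $b_{g'} \in \mm$ transports from the analogous relation for $\can^t$, since the structure constants $b_{g'}$ only record unfrozen data.

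The main (and quite mild) obstacle is bookkeeping the powers of $q^{\Hf}$ coming from the twisted product $*$, because $\Lambda(\dt)$ may differ from $\Lambda(t)$ on entries involving frozen indices. Compatibility of the pair $(\tB(\dt), \Lambda(\dt))$ however pins down $\Lambda(\dt)$ on the unfrozen part up to frozen contributions, and the normalization $[\cdot]^{\dt}$ absorbs the residual $q$-discrepancies, so the $(\prec_{\dt}, \mm)$-triangularity survives the transport. This is precisely the content of the correction technique as recorded in \cite[\S 4.2]{qin2020dual}, which makes the overall argument routine once the candidate basis is written down.
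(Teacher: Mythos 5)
Your proposal is correct and follows exactly the route the paper intends: the paper gives no proof of this lemma beyond the remark that it is ``an easy consequence of the correction technique'' of \cite{Qin12} and \cite[Section 4.2]{qin2020dual}, and your argument is a faithful unpacking of that citation --- transporting the pointed expansions by $\tau$, using compatibility of $(\tB,\Lambda)$ to see that the relative $q$-powers in products of normalized pointed elements depend only on the principal data, and letting frozen prefactors and the normalization $[\cdot]^{\dt}$ absorb the rest. The only caveat, which the paper's own statement also glosses over, is that the skew-symmetrizers $D$ of the two compatible pairs must agree for the coefficients $c_{n}$ to be literally unchanged (otherwise they are rescaled $q\mapsto q^{\delta}$, which still preserves $(\prec_{\dt},\mm)$-triangularity).
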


For any sequence $\uk:=(k_{1},\dots,k_{r})$ of letters in $I_{\ufv}$,
denote the mutation sequence $\mu_{\uk}:=\mu_{k_{r}}\cdots\mu_{k_{1}}$
(read from right to left). We also define $\tau\uk:=(\tau k_{1},\ldots,\tau k_{r})$.
The following result follows from the separation formulas of cluster
monomials in \cite{FominZelevinsky07}\cite{Tran09}\cite{BerensteinZelevinsky05}.

\begin{Cor}\label{cor:similar_basis_admissibility}

Under the assumption of Lemma \ref{lem:similar_triangular}, for any
$d\in\N$, if $X_{k_{r}}(\mu_{\uk}t)^{d}\in\can^{t}$, then $X_{\tau k_{r}}(\mu_{\tau\uk}\dt)^{d}\in\can^{\dt}$.

\end{Cor}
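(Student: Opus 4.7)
The plan is to combine the separation formula for cluster monomials with Lemma \ref{lem:similar_triangular}. By the separation formulas, write
\[
X_{k_r}(\mu_{\uk}t)^d = [X^{g} \cdot F(Y^{n_1}, \ldots, Y^{n_s})]^t,
\]
where $g = g_\ufv + g_\fv \in \Z^{I_\ufv} \oplus \Z^{I_\fv}$ is the $g$-vector and $F$ is the $F$-polynomial, with $F(0) = 1$. Crucially, the unfrozen part $g_\ufv$, the exponents $n_j \in \N^{I_\ufv}$, and the polynomial $F$ depend only on $\uk$ and the $I_\ufv \times I_\ufv$ principal part of $\tB(t)$. Since $\dt$ is similar to $t$ via $\tau$, applying the same formula to the mutation sequence $\tau\uk$ from $\dt$ yields
\[
X_{\tau k_r}(\mu_{\tau\uk}\dt)^d = [X^{\tilde g} \cdot F(Y^{\tau n_1}, \ldots, Y^{\tau n_s})]^{\dt},
\]
with the same $F$ and unfrozen $g$-vector part $\tau g_\ufv$.

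Now suppose $X_{k_r}(\mu_{\uk}t)^d = \can_g^t$. Since $\can^t$ is closed under multiplication by invertible frozen Laurent monomials (so that $\can_{h+m}^t = [X^m * \can_h^t]^t$ for any $h \in \Z^I$ and $m \in \Z^{I_\fv}$, by uniqueness of pointed bar-invariant triangular elements in a given degree), we obtain
\[
\can_{g_\ufv}^t = [X^{-g_\fv} * \can_g^t]^t = [X^{g_\ufv} \cdot F(Y^{n_1}, \ldots, Y^{n_s})]^t.
\]
Matching this against the hypothesis of Lemma \ref{lem:similar_triangular}, the coefficients $c_n$ there are precisely the coefficients of the $F$-polynomial. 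The lemma then produces
\[
\can_{\tau g_\ufv}^{\dt} = [X^{\tau g_\ufv} \cdot F(Y^{\tau n_1}, \ldots, Y^{\tau n_s})]^{\dt}.
\]
Multiplying by the frozen factor $X^{\tilde g_\fv}$ and invoking the separation formula for $\dt$ established above, we conclude
\[
\can_{\tau g_\ufv + \tilde g_\fv}^{\dt} = [X^{\tilde g_\fv} * \can_{\tau g_\ufv}^{\dt}]^{\dt} = X_{\tau k_r}(\mu_{\tau\uk}\dt)^d,
\]
so $X_{\tau k_r}(\mu_{\tau\uk}\dt)^d \in \can^{\dt}$.

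The main technical point is verifying that the $F$-polynomial and the unfrozen $g$-vector depend only on the principal part of the exchange matrix, so that they transport under similarity via $\tau$; this is a standard feature of the recursive separation formulas. In the quantum setting one must additionally check that the normalizations $[\,\cdot\,]^t$ and $[\,\cdot\,]^{\dt}$ match correctly with respect to the quasi-commutation matrices $\Lambda(t)$ and $\Lambda(\dt)$, and that the frozen-factor closure $\can_{h+m}^t = [X^m * \can_h^t]^t$ really follows from the triangular recursion; both are routine but are the technical heart of the argument.
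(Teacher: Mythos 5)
Your proof is correct and takes essentially the same approach as the paper: the paper's entire proof is a one-line appeal to the separation formulas of cluster monomials (Fomin--Zelevinsky, Tran, Berenstein--Zelevinsky), and your argument is exactly the fleshing-out of that citation --- transporting the $F$-polynomial and unfrozen $g$-vector data, which depend only on the principal part of the exchange matrix, through Lemma \ref{lem:similar_triangular}. The two technical points you flag (invariance of normalized coefficients under change of quantization, i.e.\ the correction technique of \cite{Qin12}, and closure of triangular bases under frozen factors) are precisely the ingredients the paper implicitly relies on.
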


\section{Existence}

Given some seed $t\in\Delta^{+}$. Assume there is a mutation sequence
$\uk=(k_{1},\ldots,k_{r})$ such that $t[1]=\mu_{\uk}t$ for some
permutation $\sigma$ (i.e., $I_{k}(t)=X_{\sigma k}(t[1])$). We denote
$t_{0}=t$ and $t_{i}=\mu_{k_{i}}\ldots\mu_{k_{1}}t_{0}$ for $i\in[1,r]$.
Then $t_{r}=t[1]$.

\begin{Assumption}

We make the following assumptions:
\begin{enumerate}
\item $\upClAlg$ has the triangular basis $\can^{t}$ with respect to $t$.
\item $\can^{t}$ contains the cluster monomials $X_{k_{i}}(t_{i})^{d}$
and $I_{k_{i}}(t_{i})^{d}$, $d\in\N$, $i\in[1,r]$, appearing along
the mutation sequence $\mu_{\uk}$.
\end{enumerate}
\end{Assumption}

\begin{Thm*}[{Theorem \ref{thm:existence}}]

Under the above assumptions, $\can^{t}$ is the common triangular
basis.

\end{Thm*}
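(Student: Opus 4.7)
The plan is to show that $\can^t$ is the triangular basis at every seed in $\Delta^+$, so that in particular it contains every cluster monomial; the preceding Corollary then identifies $\can^t$ with the common triangular basis. First I would propagate $\can^t$ along the given mutation sequence $\uk$: since $\can^t$ is triangular at $t = t_0$ and admissible in direction $k_1$, Proposition \ref{prop:admissibility_to_compatibility} identifies $\can^t$ with the triangular basis at $t_1$. Since $\can^t = \can^{t_1}$ still contains the assumed cluster monomials along $k_2, \ldots, k_r$, iterating gives $\can^t = \can^{t_i}$ for all $i$, and in particular $\can^t$ is triangular at $t[1] = t_r$.

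Next I would transport the hypotheses from $t$ to $t[1]$ via the similarity of $t$ and $t[1]$ under $\sigma$. Lemma \ref{lem:similar_triangular} preserves the triangular-basis structure, and Corollary \ref{cor:similar_basis_admissibility}, applied to each sub-sequence $(k_1, \ldots, k_i)$, transfers the assumed admissibility data (the injective cluster monomials $I_{k_i}(t_i)$ are themselves cluster variables at the shifted seeds $t_i[1]$, so the same corollary applies to them) along the permuted sequence $\sigma \uk$ from $t[1]$ to $t[2]$. Re-applying the first step then gives $\can^t$ triangular at $t[2]$, and inductively at every $t[n]$ for $n \geq 0$; a symmetric argument using the sequence from $t[-1]$ to $t$ handles $n < 0$.

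With $\can^t$ triangular at every $t[n]$, I would use Proposition \ref{prop:compatibility_to_admissibility} to extract cluster monomials. For each $k \in I_{\ufv}$, $n \in \Z$, $d \geq 0$, the basis element of $\can^t$ whose degree at $t[n]$ matches $X_k(\mu_k t[n])^d$ is automatically pointed at $t[n-1]$ too, with degrees related by the tropical transformation $\phi_{t[n-1], t[n]}$, so part (1) forces it to equal $X_k(\mu_k t[n])^d$; the analogous argument using $(t[n+1], t[n])$ and part (2) gives $I_k(\mu_k t[n])^d \in \can^t$. Proposition \ref{prop:admissibility_to_compatibility} then upgrades $\can^t$ to the triangular basis at each $\mu_k t[n]$. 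To cover the remaining seeds of $\Delta^+$, I would iterate: at every newly reached seed $s$ the shift $[\pm 1]$ is carried along mutations by similarity, so the compatibility argument can be repeated to produce every cluster monomial adjacent to $s$ and propagate triangularity to its neighbors. By induction on mutation distance from the backbone $\{t[n]\}_{n \in \Z}$, $\can^t$ ends up triangular at every seed in $\Delta^+$, so it contains all cluster monomials and the Corollary concludes.

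The main obstacle is the final inductive sweep: one must verify that whenever triangularity is extended to a new seed $s$, the seeds $s[\pm 1]$ needed for the compatibility argument at $s$ are also reached in time, and the similarity machinery must be applied in a coordinated way along the expanding frontier. This bookkeeping --- ensuring that no seed is missed and that the hypotheses of Propositions \ref{prop:admissibility_to_compatibility} and \ref{prop:compatibility_to_admissibility} remain available at each step --- reduces to repetitions of the pattern already established for $\{t[n]\}$, but closing the induction cleanly is the most delicate part of the argument.
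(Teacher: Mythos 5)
Your proposal follows essentially the same route as the paper's proof: propagate the basis along $\uk$ via Proposition \ref{prop:admissibility_to_compatibility}, extend to the whole backbone $\{t[n]\}_{n\in\Z}$ by the similarity machinery (Lemma \ref{lem:similar_triangular}, Corollary \ref{cor:similar_basis_admissibility}), use Proposition \ref{prop:compatibility_to_admissibility} to manufacture admissibility in every direction $j$, and then induct on mutation distance from the backbone --- which is exactly the paper's concluding ``repeat the above process'' with its parallel chains of compatible triangular bases. The only cosmetic difference is that you obtain the injective cluster monomials $I_k(\mu_k t[n])^d$ from part (2) of Proposition \ref{prop:compatibility_to_admissibility} directly, whereas the paper transports $X_j(\mu_j t)^d$ to the similar seed $t[1]$ via Corollary \ref{cor:similar_basis_admissibility}; the two devices carry the same content.
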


\begin{proof}

By Proposition \ref{prop:admissibility_to_compatibility} (admissibility
implies compatibility), $\can^{t}=\can^{t_{1}}=\cdots=\can^{t[1]}$
is the triangular basis with respect to $t_{i}$ and compatibly pointed
at $t_{i}$ for all $i\in[0,r]$. Recall that $t[s]$ and $t[s+1]$
are similar for any $s\in\Z$. Applying Corollary \ref{cor:similar_basis_admissibility}
for the mutation sequence $\mu_{\sigma^{s}\uk}$ from $t[s]$ to $t[s+1]$
recursively for $s$, we get a chain of \textbf{compatible triangular
basis} 
\begin{align*}
\cdots & =\can^{t[-1]}=\can^{t}=\can^{t[1]}=\cdots,
\end{align*}
that is, $\can^{t}$ is the triangular basis with respect to $t[s]$
for any $s\in\Z$, and it is compatibly pointed at $t,t[s]$ $\forall s$.

Take any $j\in I_{\ufv}$. By Proposition \ref{prop:compatibility_to_admissibility}
(compatibility implies admissibility), the cluster monomials $X_{j}(\mu_{j}t)^{d}$
belong to $\can^{t}$, $d\in\N$. Since $t$ and $t[1]$ are similar,
Corollary \ref{cor:similar_basis_admissibility} implies $I_{j}(t)^{d}=X_{\sigma j}(\mu_{\sigma j}t[1])^{d}\in\can^{t[1]}=\can^{t}$.
So $\can^{t}$ is admissible in direction $j$. Applying Corollary
\ref{cor:similar_basis_admissibility} for the pair of similar seeds
$t$ and $t[s]$, we deduce that $\can^{t[s]}$ is admissible in direction
$\sigma^{s}j$. So we get two chains of compatible triangular bases:

\begin{align*}
 & \cdots &  & = &  & \can^{t[-1]} &  & = &  & \can^{t} &  & = &  & \can^{t[1]} & \cdots\\
 &  &  &  &  & \parallel &  &  &  & \parallel &  &  &  & \parallel\\
 & \cdots &  & = &  & \can^{(\mu_{j}t)[-1]} &  & = &  & \can^{\mu_{j}t} &  & = &  & \can^{(\mu_{j}t)[1]} & \cdots
\end{align*}

Repeat the above process, we get compatible triangular bases $\can^{t}=\can^{t'}$
at $t$ and any $t'\in\Delta^{+}$. 

\end{proof}

Take any quantum unipotent subgroup $\kk[N_{-}(w)]\simeq\bClAlg(t_{0})$,
where the initial seed $t_{0}$ is constructed from any given reduced
word of $w$. Define 
\begin{align*}
\can^{t_{0}} & :=\{[p*b]^{t_{0}}|p\text{ frozen factor},\ b\in \dCan\}.
\end{align*}
Then $\can^{t_{0}}$ is a basis of $\upClAlg(t_{0})=\clAlg(t_{0})=\kk[N_{-}(w)][\cP]$,
where $\cP$ is the multiplicative group generated by the frozen factors
and $q^{\Hf\Z}$.

\begin{Prop}[{\cite{qin2017triangular}}]

$\can^{t_{0}}$ is the triangular basis with respect to $t_{0}$.

\end{Prop}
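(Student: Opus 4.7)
The plan is to verify the four axioms of a triangular basis directly for $\can^{t_{0}}$, using classical structural results on $\dCan$ (Lusztig-Kashiwara-Kimura) together with the identification of $t_{0}$ as the PBW chart attached to the chosen reduced word of $w$ (Geiss-Leclerc-Schr\"oer).

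First, I would establish pointedness and bar-invariance. The reduced word underlying $t_{0}$ provides a dual PBW basis $\{P^{*}(c)\}_{c\in\N^{\ell}}$ of $\kk[N_{-}(w)]$. Each $P^{*}(c)$ is compatibly pointed in $\LP(t_{0})$ with a degree depending linearly on $c$, and Lusztig's lexicographic order on exponent vectors $c$ matches the dominance order $\prec_{t_{0}}$ under this degree map (up to frozen contributions). By Lusztig-Kashiwara, each $b\in\dCan$ is bar-invariant and decomposes unitriangularly with respect to $\{P^{*}(c)\}$ in Lusztig's order, with off-diagonal coefficients in $\mm$. Hence each $b$ is $g$-pointed in $\LP(t_{0})$ for a unique class $g\in\Z^{I}/\Z^{I_{\fv}}$, and adjoining the frozen factors $p$ produces the $\Z^{I}$-indexed, bar-invariant, pointed family $\can^{t_{0}}=\{\can_{g}^{t_{0}}\}_{g\in\Z^{I}}$.

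Next, I would check that $\can^{t_{0}}$ contains all cluster monomials at $t_{0}$ and $t_{0}[1]$. The initial cluster variables $X_{i}(t_{0})$ are, up to $q$-powers, dual quantum unipotent minors, hence real elements of $\dCan$, and they pairwise $q$-commute in the initial quantum torus. Known multiplicative properties of real, $q$-commuting dual canonical basis elements then imply that every cluster monomial in $t_{0}$ lies, after normalization, in $\can^{t_{0}}$. Since $t_{0}[1]$ is constructed from a shifted reduced word for the same $w$, its cluster variables are again dual quantum unipotent minors and the same argument takes care of cluster monomials at $t_{0}[1]$.

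Finally, I would prove the triangularity
\[
[X_{i}(t_{0})*\can_{g}^{t_{0}}]^{t_{0}} = \can_{g+f_{i}}^{t_{0}} + \sum_{g'\prec_{t_{0}}g+f_{i}} b_{g'}\,\can_{g'}^{t_{0}}, \qquad b_{g'}\in\mm.
\]
Since $X_{i}(t_{0})$ is a real element of $\dCan$ with an explicit dual PBW profile, expanding $X_{i}(t_{0})*b$ first in the dual PBW basis (via the commutation relations among dual root vectors) and then back in $\dCan$ via the Lusztig-Kashiwara change-of-basis matrix yields a unitriangular expansion in Lusztig's lex order with coefficients in $\mm$; under the degree dictionary of the first step this is exactly the claimed $\prec_{t_{0}}$-triangularity. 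The hard part of the argument is precisely this $\mm$-bound: pointedness, bar-invariance, and the presence of cluster monomials are bookkeeping once the PBW/dual-canonical dictionary is fixed, but controlling the correction coefficients is genuinely subtle in the non-symmetric case, where Lusztig's positivity is unavailable and one must extract the bound directly from Lusztig-Kashiwara triangularity together with explicit dual PBW multiplication rules.
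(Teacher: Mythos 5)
Your strategy coincides with the paper's (and with its source \cite{qin2017triangular}, \cite{qin2020dual}): take the dual PBW basis as the standard basis, use Lusztig--Kashiwara unitriangularity as the Kazhdan--Lusztig-type input, translate the PBW order into the dominance order $\prec_{t_{0}}$ via pointedness of dual PBW monomials in $\LP(t_{0})$, and identify the cluster variables of $t_{0}$ and $t_{0}[1]$ as unipotent quantum minors lying in $\dCan$. This is exactly what the paper's remark summarizes as ``the triangularity between the standard basis and the canonical basis implies the $\prec_{t_{0}}$-triangularity.''

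There is, however, a genuine gap in your verification of the cluster-monomial axiom. You invoke ``known multiplicative properties of real, $q$-commuting dual canonical basis elements'' to conclude that a product of the $q$-commuting minors forming a cluster lies in $\dCan$ up to normalization. No such black-box theorem exists in the generality this paper requires: Berenstein--Zelevinsky-type multiplicativity for $q$-commuting dual canonical elements is false in general (Leclerc's imaginary vectors), and the statement that a real element commuting with another basis element has product again in the basis is a monoidal-categorification result in the style of \cite{Kang2018}, available only for \emph{symmetric} Kac--Moody types --- whereas the whole point of \cite{qin2020dual} is the non-symmetric case, where you yourself note positivity is unavailable. What actually closes this step is the same mechanism that powers your triangularity step: Kimura's crystal-theoretic multiplication formula for unipotent quantum minors (valid for all symmetrizable types) shows that multiplication by such a minor acts on $\dCan$ unitriangularly with lower-order coefficients in $\mm$; since the normalized product of pairwise $q$-commuting bar-invariant elements is itself bar-invariant, and $\mm$ contains no nonzero bar-invariant elements, the lower-order terms must vanish, so each cluster monomial is a single basis element. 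Relatedly, your description of $t_{0}[1]$ as ``constructed from a shifted reduced word'' is not accurate: its cluster variables are the unipotent minors produced at the end of the T-system mutation sequence (equivalently, twists of flag minors up to frozen factors, cf.\ \cite{kimura2017twist}), not flag minors of another reduced word, so the containment at $t_{0}[1]$ also requires this triangularity-plus-bar-invariance argument rather than a rerun of the flag-minor case.
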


\begin{Rem}

The proof works for Kazhdan-Lusztig type bases: the triangularity
between the standard basis and the canonical basis implies the $\prec_{t_{0}}$-triangularity.

\end{Rem}

\begin{Thm*}[{Theorem \ref{thm:quantum_groups_bases}}]

$\can^{t_{0}}$ is the common triangular basis of $\kk[N_{-}(w)][\cP]$.

\end{Thm*}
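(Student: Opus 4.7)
The plan is to reduce to Theorem \ref{thm:existence} by verifying its two hypotheses for $\can^{t_0}$. Hypothesis (1), that $\can^{t_0}$ is the triangular basis with respect to $t_0$, is precisely the preceding Proposition. Hypothesis (2) requires exhibiting a mutation sequence $\uk = (k_1, \ldots, k_r)$ with $\mu_{\uk} t_0 = t_0[1]$ such that for each $i \in [1,r]$ and every $d \in \N$, both cluster monomials $X_{k_i}(t_i)^d$ and $I_{k_i}(t_i)^d$ belong to $\can^{t_0}$.

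For $\uk$ I would take the standard mutation sequence induced by the reduced word of $w$ used to construct $t_0$: its partial products implement the successive cyclic shifts of that reduced word, so each intermediate seed $t_i$ is the cluster structure attached to one of these shifted words. By the construction of the cluster structure on $\kk[N_-(w)]$ due to Geiss-Leclerc-Schr\"oer (and Goodearl-Yakimov in the quantum setting), every cluster variable $X_{k_i}(t_i)$ arising along this path is a unipotent quantum minor of $\kk[N_-(w)]$. It is known that each unipotent quantum minor $D$ lies in $q^{\Hf\Z}\,\dCan$. Moreover, $D$ $q$-commutes with itself, so the normalized $d$-fold twisted product $[D^{*d}]^{t_0}$ differs from the commutative power $D^d$ only by a half-integer power of $q$, and such a power of a real dual canonical basis element remains in $q^{\Hf\Z}\,\dCan$. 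Hence $X_{k_i}(t_i)^d \in q^{\Hf\Z}\,\dCan$, and by definition of $\can^{t_0}$ it follows that $X_{k_i}(t_i)^d \in \can^{t_0}$.

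The injective cluster variables $I_{k_i}(t_i) = X_{\sigma k_i}(t_i[1])$ are the step most likely to require care, since they live in shifted seeds $t_i[1]$ rather than in the $t_i$ themselves. The plan here is to use that $t_i$ and $t_i[1]$ are similar and that the shift $t_0 \leadsto t_0[1]$ is realized by $\uk$: continuing the mutation sequence past $t_r = t_0[1]$ along the analogous sequence attached to the next cyclic shift exhibits $I_{k_i}(t_i)$ itself as a cluster variable appearing along a standard mutation sequence, hence again as a unipotent quantum minor. Its powers then lie in $\can^{t_0}$ by the same $q$-commutation argument, or alternatively by Corollary \ref{cor:similar_basis_admissibility} applied to the similar seeds $t_i$ and $t_i[1]$. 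Once hypothesis (2) is established, Theorem \ref{thm:existence} immediately yields that $\can^{t_0}$ is the common triangular basis of $\kk[N_-(w)][\cP]$, proving the statement.
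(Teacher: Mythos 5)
Your overall reduction is the same as the paper's: hypothesis (1) of Theorem \ref{thm:existence} is the preceding Proposition, and the monomials $X_{k_i}(t_i)^d$ along the T-system sequence lie in $\can^{t_0}$ because the $X_{k_i}(t_i)$ are unipotent quantum minors, which belong to $q^{\frac{\Z}{2}}\dCan$ (and frozen factors and $q$-powers are absorbed by the definition of $\can^{t_0}$). The genuine gap is exactly where you anticipated trouble: the injective cluster variables $I_{k_i}(t_i)=X_{\sigma k_i}(t_i[1])$. Neither of your two routes establishes $I_{k_i}(t_i)^d\in\can^{t_0}$.

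Your first route (continue the mutation sequence past $t_0[1]$ and invoke GLS/GY again) does not work as stated: the GLS/GY construction and the T-system identities realize as minors only the cluster variables created between $t_0$ and $t_0[1]$, namely the interval minors $D(w_{\leq b}\varpi_i,w_{\leq a}\varpi_i)$, and this supply is exhausted exactly at $t_0[1]$. There is no ``next cyclic shift'' seed to invoke --- for general $w$ the cyclic shift of a reduced word of $w$ is not a reduced word of $w$ --- and in fact the principle ``cluster variable along a standard sequence $\Rightarrow$ minor'' cannot persist past $t_0[1]$, since a fixed $w$ admits only finitely many unipotent minors while the seeds $t_0[s]$, $s\in\Z$, carry infinitely many distinct cluster variables in general. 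Your second route, Corollary \ref{cor:similar_basis_admissibility} applied to the similar seeds $t_0$ and $t_0[1]$, is circular: that corollary only yields $I_{k_i}(t_i)^d\in\can^{t_0[1]}$, where $\can^{t_0[1]}$ is the abstract triangular basis with respect to $t_0[1]$ produced by Lemma \ref{lem:similar_triangular} via the correction technique, a priori a different set from $\can^{t_0}$. Identifying $\can^{t_0[1]}$ with $\can^{t_0}$ is what Proposition \ref{prop:admissibility_to_compatibility} would give, but that proposition requires admissibility, i.e.\ precisely the membership $I_{k_i}(t_i)^d\in\can^{t_0}$ you are trying to prove. The paper closes this gap with an external, substantial input that your proposal misses entirely: the Kimura--Oya quantum twist automorphism $\tw$ of $\clAlg$, which satisfies $\tw(\can^{t_0})=\can^{t_0}$ and $\tw(X_{k_i}(t_i))=p_i\cdot I_{k_i}(t_i)$ for frozen factors $p_i$; since $\can^{t_0}$ absorbs frozen factors, this immediately gives $I_{k_i}(t_i)^d\in\can^{t_0}$, after which Theorem \ref{thm:existence} applies as you say.
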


\begin{proof}

We know that there exists a sequence $\uk=(k_{1},\ldots,k_{r})$ such
that $t[1]=\mu_{\uk}t$ and $\can^{t_{0}}$ contains $X_{k_{i}}(t_{i})^{d}$,
$i\in[1,r]$ (called $T$-systems of unipotent quantum minors \cite{GeissLeclercSchroeer11}\cite{GY13})
.

By \cite{kimura2017twist}, there exists an automorphism $\tw$ of
$\clAlg$, called the twist automorphism, such that $\tw(\can^{t_{0}})=\can^{t_{0}}$. 

We further show that, $\forall i\in[1,r]$,

\begin{align*}
\tw(X_{k_{i}}(t_{i})) & =p_{i}\cdot I_{k_{i}}(t_{i}),\ \text{some frozen factor }p_{i}.
\end{align*}
It follows that $I_{k_{i}}(t_{i})^{d}\in\can^{t_{0}}$. Then the desired
claim follows from Theorem \ref{thm:existence}.

\end{proof}





\def\cprime{$'$}
\providecommand{\bysame}{\leavevmode\hbox to3em{\hrulefill}\thinspace}
\providecommand{\MR}{\relax\ifhmode\unskip\space\fi MR }
\providecommand{\MRhref}[2]{%
  \href{http://www.ams.org/mathscinet-getitem?mr=#1}{#2}
}
\providecommand{\href}[2]{#2}

\end{document}